\theoremstyle{plain}
\newtheorem{theorem}{Theorem}[section]
\newtheorem{lemma}[theorem]{Lemma}
\newtheorem{corollary}[theorem]{Corollary}
\newtheorem{prop}[theorem]{Proposition}
\theoremstyle{definition}
\newtheorem{definition}[theorem]{Definition}
\theoremstyle{remark}
\newtheorem{remark}{Remark}
\def\R{\mathbb R}
\def\Z{{\mathbb Z}}
\def\al{\alpha}
\begin{document}


\title{Exponential Dichotomy for Noninvertible Linear Difference Equations}

\author{
\name{F. Battelli\textsuperscript{a}\thanks{F. Battelli. Email: battelli@dipmat.univpm.it}, M. Franca\textsuperscript{b}\thanks{M. Franca. Email: matteo.franca4@unibo.it} and K.~J. Palmer\textsuperscript{c}\thanks{K.~J. Palmer. Email: palmer@math.ntu.edu.tw; Corresponding author, partially supported by Marche Polytechnic University}
}
\affil{\textsuperscript{a}DIISM, Marche Polytecnic University, Ancona Italy; \textsuperscript{b}Dipartimento di Scienze Matematiche, Universit\`a di Bologna Italy; \textsuperscript{c}Department of Mathematics, National Taiwan University, Taipei - Taiwan}
}

\maketitle

\begin{abstract}
	In this article we study exponential dichotomies for noninvertible linear difference equations in finite dimensions. After giving the definition, we study the extent to which the projection $P(k)$ in a dichotomy is unique. For equations on
	$\Z$ it is unique but for equations on $\Z_+$ only its range is unique and for $\Z_-$ only its nullspace.
	Here we strengthen Kalkbrenner's results and give a complete characterization of all possible projections.
	Next we study the possibility of extending the dichotomy to a larger interval. We reproduce the results of P\" otzsche but also show exactly	when the original projection remains unchanged. 	
	Next we prove that the roughness theorem, well known for additive perturbations,
	holds for multiplicative perturbations also. The proof uses ideas of Zhou, Lu and Zhang.
	Finally, following Ducrot, Magal and Seydi, we mention that the results by Palmer on finite time conditions on dichotomy for the invertible case can be extended to the noninvertible case.

\end{abstract}

\begin{keywords}
linear; difference equations; noninvertible; exponential dichotomy; roughness
\end{keywords}



\section{Introduction}

In this article we study exponential dichotomies for linear difference equations in finite dimensions in which the coefficient matrix is neither assumed to be bounded nor invertible. Note that such equations can arise even in the context of invertibility. If we start with an invertible system where the inverse of the coefficient matrix is not uniformly bounded and which has an exponential dichotomy in the usual sense, then an arbitrarily small perturbation of it may not be invertible. However it turns out that the exponential dichotomy property does not disappear. In fact, the perturbed system has an exponential dichotomy in the sense given below. This definition is consistent with that given by Henry \cite{H}. Also it is the same as the regular exponential dichotomy as defined by Kalkbrenner \cite{K}, by P\" otzsche in \cite{PC} and in Ducrot et al. in \cite{DMS1}. Other definitions of exponential dichotomy have been given in, for example, \cite{AK} and \cite{S}, but we feel the definition below is the most appropriate one for the reasons just given.
\medskip

In Section 2, we give two equivalent definitions of exponential dichotomy. Then in some remarks we prove some identities that are important later. It is possible that some of these have not been noticed before.
\medskip

Then in Section 3 we study the extent to which the projection $P(k)$ in a dichotomy is unique. For equations on $\Z_+$, we show in Proposition \ref{prop1} that the range of $P(k)$ is uniquely determined; however, corresponding to any complement $W$ of the range of $P(0)$, there is an invariant projection $Q(k)$, with respect to which the equation has a dichotomy, such that the nullspace of $Q(0)$ is $W$. We prove an analogous result for equations on $\Z_-$ with range replaced by nullspace (note that in the noninvertible case, the results for $\Z_-$ cannot be deduced from those for $\Z_+$ simply by reversing the time; in fact, as we see below, sometimes the results are different). For dichotomies on $\Z$ we deduce that the projection is uniquely determined. The results for $\Z_+$ and $\Z_-$ strengthen Kalkbrenner's results but, of course, we must remember that Kalkbrenner was working in infinite dimensions. As far as we can see, Kalkbrenner only proved that for $\Z_+$ (resp. $\Z_-$), $P(k)$ could be replaced by any invariant projection $Q(k)$ with the same range (resp. nullspace). However he did not show that $Q(0)$ could be any projection with the same range (resp. nullspace) as $P(0)$.
\medskip

In Proposition \ref{propnew} we strengthen these results further. If an equation has a dichotomy on $\Z_+$, then given any $m\ge 0$ and a complement $W$ of the range of $P(m)$ there is an invariant projection $Q(k)$, with respect to which the equation has a dichotomy on $\Z_+$, and such that the range of $Q(m)$ equals that of $P(m)$ and its nullspace is $W$. However,  in contrast to the case where the $A(k)$  are invertible, if we assign the nullspace of $Q(m)$ with $m \ge 1$ the projections $Q(k)$ for $0 \le k \le m-1$ are not in general uniquely defined. There is an analogous result for $\Z_-$ but here there is a difference. If an equation has a dichotomy on $\Z_-$, then given any $m\le 0$ and a complement $W$ of the nullspace of $P(m)$ which {\it contains the nullspace of $\Phi(0,m)$} ($\Phi(m,n)$ is the transition matrix), there is an invariant projection $Q(k)$, with respect to which the equation has a dichotomy on $\Z_-$, and such that the nullspace of $Q(m)$ equals that of $P(m)$ and its range is $W$.
  Again, in contrast to the case where the $A(k)$  are invertible, if we assign the nullspace of $Q(m)$ with $m\le -1$ the projections $Q(k)$ for $m+1\le k\le 0$ are not in general uniquely defined.
\medskip

In Section 4 we study the possibility of extending the dichotomy to a larger interval.
First we show if an equation has a dichotomy on $[m,\infty)$ with projection
$P(k)$ of rank $r$, where $m>0$, then it can be extended to $\Z_+$ if and only if dim $(\Phi(m,0)^{-1}({\cal R}P(m))))=r$. This was proved already by P\" otzsche \cite{PC1} (inspired by Lin \cite{L}) but here we show that in
addition the dichotomy can be extended without changing the
projection when $k\ge m$. The results for $\Z_-$ are rather different.
For $m<0$, suppose \eqref{lineq}
has an exponential dichotomy on $k\le m$ with projection $P(k)$   of rank $r$.
Then \eqref{lineq} has an exponential dichotomy on $k\le 0$ with projection of
rank $r$ if and only if $\Phi(0,m)$ is one to one on ${\cal N}P(m)$; \eqref{lineq} has an exponential dichotomy on $k\le 0$ with projection $P(k)$ for $k\le m$ if and only if $\Phi(0,m)$ is one to one on ${\cal N}P(m)$ and ${\cal N}(\Phi(0,m))\subset {\cal R}P(m)$. Note that the first statement here was proved already by P\" otzsche \cite{PC1} (inspired by Lin \cite{L}) but the second statement appears
	to be new.
\medskip

In Section 5, we prove the roughness theorem. For additive perturbations, that is,
$A(k)$ is perturbed to $A(k)+B(k)$ where $B(k)$ is small, this theorem has already been
proved  by, among others, Henry \cite{H}, Kalkbrenner \cite{K}, P\" otzsche \cite{PC}, Ducrot et al. \cite{DMS1}
  and Zhou, Lu and Zhang \cite{ZLZ}.
The novelty here is that we also prove that the theorem holds for multiplicative perturbations $A(k)[I+B(k)]$   so that roughness of exponential dichotomy holds for both additive and multiplicative perturbations. This is in contrast to the situation with exponential separation. It was shown in \cite{BPD} that, in general, the roughness theorem
  for exponential separation  holds only for multiplicative perturbations.
\medskip

In Section 6, we study finite time conditions on dichotomy.
We could extend what was done in \cite{PK1} for the invertible case to the noninvertible case. However this has already been done in \cite{DMS} in a more general situation. We content ourselves here with stating the main result.
\section{Exponential dichotomy: definition}

We consider the system
\[x(k+1)=A(k)x(k),\quad x(k)\in \R^n,\quad  k\in J',\]
where if $J$ is an infinite interval of integers, we take $J'=(-\infty,{\rm max}\, J-1]$ if $J$ is bounded above and otherwise we take $J'=J$.
  This means that if $x(k)$ satisfies \eqref{lineq} for $k\in J'$, then $x(k)$ is defined for $k\in J$.
\medskip

We denote by $\Phi(k,m)=A(k-1)\cdots A(m)$ the associated transition matrix for $k\ge m$ in $J$. In this section, we define an exponential dichotomy.
\medskip

\begin{definition}\label{EDdef} We say the system
\begin{equation}\label{lineq}
x(k+1)=A(k)x(k)
\end{equation}
has an {\it exponential dichotomy} on an infinite interval $J$ of integers if
there is a projection function $P(k)$ of constant rank such that
\begin{equation}\label{k0}
\Phi(k,m)P(m)=P(k)\Phi(k,m),
\end{equation}
for $k\ge m$ in $J$
and there exist constants $L\ge 1$ and $\alpha\,>0$ such that for $k\ge m$ in $J$
\begin{equation}\label{newk1}
|\Phi(k,m)P(m)|\le Le^{-\alpha(k-m)}.
\end{equation}
Also we assume $A(k):{\cal N}P(k)\to {\cal N}P(k+1)$ is invertible
for $k\in J'$ so that $\Phi(k,m):{\cal N}P(m)\to {\cal N}P(k)$ is invertible for $k\ge m$ in $J$ and we assume that for $k\ge m$ in $J$
\begin{equation}\label{newk1a}
|\Phi(m,k)(I-P(k))|\le Le^{\alpha(k-m)},
\end{equation}
where $\Phi(m,k)$ is the inverse of $\Phi(k,m):{\cal N}P(m)\to {\cal N}P(k)$.
\end{definition}
\medskip
Now we give an equivalent definition.
\medskip

\begin{definition}[\bf Equivalent Definition]\label{EqEDdef}
We say system \eqref{lineq} has an {\it exponential dichotomy} on an infinite interval $J$ of integers if there is a projection function $P(k)$ of constant rank such that
\begin{equation}\label{k0a}
|P(k)|,\quad  |I-P(k)|\le M
\end{equation}
for all $k\in J$, \eqref{k0} holds for $k\ge m$
and there exist constants $K\ge 1$ and $\alpha\,>0$ such that for $k\ge m$ in $J$ and $\xi\in{\cal R}P(m)$
\begin{equation}\label{k1}
|\Phi(k,m)\xi|\le Ke^{-\alpha(k-m)}|\xi|.
\end{equation}
Also we assume $A(k):{\cal N}P(k)\to {\cal N}P(k+1)$ is invertible   for $k\in J'$  so that $\Phi(k,m):{\cal N}P(m)\to {\cal N}P(k)$ is invertible for $k\ge m$ in $J$ and we assume
\begin{equation}\label{k1a}
|\Phi(k,m)\xi|\ge K^{-1}e^{\alpha(k-m)}|\xi|,\quad k\ge m,\; \xi\in {\cal N}P(m).
\end{equation}
\end{definition}

Now we show the equivalence of these definitions.
Denoting the inverse of $\Phi(k,m):{\cal N}P(m)\to {\cal N}P(k)$ by $\Phi(m,k)$, we see that \eqref{k1a} implies that
\begin{equation}\label{k1a3}
|\Phi(m,k)\xi|\le Ke^{-\alpha(k-m)}|\xi|,\quad k\ge m,\; \xi\in {\cal N}P(k).
\end{equation}

 From this and \eqref{k1}, it follows that Definition \ref{EqEDdef} implies Definition \ref{EDdef} with $L=KM$.
On the other hand, \eqref{newk1} and \eqref{newk1a} imply \eqref{k0a} and \eqref{k1}  with $K=M=L$. Moreover, it follows from \eqref{newk1a} that when $\xi\in{\cal N}P(m)$
and $k\ge m$,
\[ |\xi|=|\Phi(m,k)(I-P(k))\Phi(k,m)\xi|\le Le^{-\alpha(k-m)}|\Phi(k,m)\xi|\]
so that \eqref{k1a} holds with $K=L$.
Thus Definition \ref{EDdef} implies Definition \ref{EqEDdef} with $M=K=L$.

\medskip

If $P(k)$ has rank $r$ ($0\le r\le n$), we say that \eqref{lineq} has an exponential dichotomy with rank $r$.
\medskip

\begin{remark}\label{rem1}
i) Property \eqref{k0} is referred to as the {\em invariance} of the family of projections $P(k)$ and is equivalent to
\begin{equation}\label{k1a4}
A(k)P(k)=P(k+1)A(k)
\end{equation}
for any $k,k+1\in J$. Indeed \eqref{k1a4} is equation \eqref{k0} with $k+1$ instead of $k$ and $k$ instead of $m$. Next, assuming \eqref{k1a4} we get by induction:
\[\begin{array}{l}
\Phi(k+1,m)P(m) = A(k)\Phi(k,m)P(m)=A(k)P(k)\Phi(k,m)\\
=P(k+1)A(k)\Phi(k,m)=P(k+1)\Phi(k+1,m).
\end{array} \]

ii) Next, a family of projections $P(k)$ is invariant if and only if $A(k):{\cal N}P(k)\to{\cal N}P(k+1)$ and $A(k):{\cal R}P(k)\to{\cal R}P(k+1)$. Indeed if $P(k)$ is invariant, and $x\in{\cal R}P(k)$ we get $A(k)x = A(k)P(k)x=P(k+1)A(k)x\in{\cal R}P(k+1)$; next, if $x\in{\cal N}P(k)$ we get $A(k)x = A(k)(I-P(k))x=(I-P(k+1))A(k)x\in {\cal N}P(k+1)$.  Conversely, if $A(k):{\cal N}P(k)\to{\cal N}P(k+1)$ and $A(k):{\cal R}P(k)\to{\cal R}P(k+1)$ we have, for any $x\in\R^n$:
\[
P(k+1)A(k)P(k)x = A(k)P(k)x
\]
(because $A(k):{\cal R}P(k)\to{\cal R}P(k+1)$) and
\[
P(k+1)A(k)(I-P(k))x  = 0
\]
(because $A(k):{\cal N}P(k)\to{\cal N}P(k+1)$). Adding the two equalities we get
\[
P(k+1)A(k)x = A(k)P(k)x
\]
for any $x\in\R^n$.
\medskip

iii) Note if we assume that for a given $m$, the range of $P(m)$ is the subspace of initial values of solutions bounded for $k\ge m$, then it follows that $\Phi(k,m):{\cal N}P(m)\to{\cal N}P(k)$ is invertible. For suppose there exists $\xi\in {\cal N}P(m)$ such
that $\Phi(k,m)\xi=0$. Then $\Phi(\ell,m)\xi=0$ for all $\ell\ge k$ and so $\Phi(k,m)\xi$ is bounded in $k\ge m$. It follows that
$\xi\in {\cal R}P(m)$ and so $\xi=0$. Thus $\Phi(k,m):{\cal N}P(m)\to{\cal N}P(k)$ is one to one and hence invertible.
\medskip

iv) The invariance of $P(k)$ and invertibility of $A(k):{\cal N}P(k) \to {\cal N}P(k+1)$ imply the properties
\begin{equation}\label{invPk1}
\Phi(k,m)({\cal R}P(m))\subset {\cal R}P(k),\quad
\Phi(k,m)({\cal N}P(m))={\cal N}P(k),\quad k\ge m.
\end{equation}
However, for any $k\ge m$, the stronger property
\begin{equation}\label{invPk2}
\Phi(k,m)^{-1}({\cal R}P(k)) := \{ x\in\R^n \mid \Phi(k,m)x \in {\cal R}P(k)\}={\cal R}P(m)
\end{equation}
is also true. Indeed it follows from \eqref{invPk1} that ${\cal R}P(m)\subset \Phi(k,m)^{-1}({\cal R}P(k))$. To prove $\Phi(k,m)^{-1}({\cal R}P(k))\subset {\cal R}P(m)$, suppose $x\in\R^n$ is such that $\Phi(k,m)x\in{\cal R}P(k)$. We want to prove that $x\in{\cal R}P(m)$. Write $x=u+v$ with $u\in{\cal R}P(m)$ and $v\in{\cal N}P(m)$. Then $\Phi(k,m)(x-u)=\Phi(k,m)v\in{\cal N}P(k)$. But the left hand side belongs to ${\cal R}P(k)$ because $\Phi(k,m)x\in{\cal R}P(k)$ (by the assumption) and $\Phi(k,m)u\in{\cal R}P(k)$ (by invariance). So $\Phi(k,m)v=0$ and hence $v=0$.
Thus $x\in{\cal R}P(m)$.
\medskip

 Also for $k\ge m$ we have the property
	\begin{equation}\label{invPk3} \Phi(k,m)^{-1}({\cal N}P(k))={\cal N}P(m)\oplus{\cal N}(\Phi(k,m)).\end{equation}
To see this, first note that ${\cal N}P(m)\cap{\cal N}(\Phi(k,m))=\{0\}$ follows from the fact that $\Phi(k,m)$ is one to one on ${\cal N}P(m)$. So we need only show that
\[
	{\cal N}P(m)+{\cal N}(\Phi(k,m))=\Phi(k,m)^{-1}({\cal N}P(k)).
\]
First let $u+v\in {\cal N}P(m)+{\cal N}(\Phi(k,m))$. Then $\Phi(k,m)u\in {\cal N}P(k)$ and $\Phi(k,m)v=0$.	This implies that $\Phi(k,m)(u+v)\in {\cal N}P(k)$ and hence that $u+v$ is in $\Phi(k,m)^{-1}({\cal N}P(k))$. Conversely, suppose $x\in \Phi(k,m)^{-1}({\cal N}P(k))$. Then $\Phi(k,m)x$ is in ${\cal N}P(k)$. There exists a unique $u\in {\cal N}P(m)$ such that $\Phi(k,m)u=\Phi(k,m)x$. Then $x-u\in {\cal N}(\Phi(k,m))$. So $x\in {\cal N}P(m)+{\cal N}(\Phi(k,m))$.
\medskip

v) We also note that if $k\ge m$
\begin{equation}\label{invPk4}
	{\cal N}(\Phi(k,m))\subset {\cal R}P(m).
\end{equation}
For suppose $\Phi(k,m)x=0$. Then $x\in \Phi(k,m)^{-1}({\cal R}P(k))$ and hence $x\in {\cal R}P(m)$.
\end{remark}

\section{Changing the projection}

Suppose \eqref{lineq} has an exponential dichotomy with projection $P(k)$. When the dichotomy is on $\Z_+$, ${\cal R}P(0)$ is called the {\it stable subspace}; when the dichotomy is on $\Z_-$, ${\cal N}P(0)$ is called the {\it unstable subspace}. In this section we show that the stable and unstable subspaces are uniquely determined (Lemma \ref{lem0}) but that the projections are not necessarily so (Proposition \ref{prop1}). Then if a system has a dichotomy on both half-axes, we describe what must be added to ensure dichotomy on the whole axis (Corollary \ref{cor1}).
\medskip

First we state the well-known lemma,  which characterizes the stable and unstable subspaces. We include the proof for the sake of
	completeness.

\begin{lemma}\label{lem0} Suppose system \eqref{lineq} has an exponential dichotomy on $J$ with projection $P(k)$. If $J$ contains $[m,\infty)$, then
\[{\cal R}(P(m))=\left\{\xi:\sup_{k\ge m}|\Phi(k,m)\xi|<\infty\right\}.\]
If $J$ contains $(-\infty,m]$, then ${\cal N}(P(m))$ is the set of $\xi$ for which there exists a solution $y(k)$ of \eqref{lineq} bounded in $k\le m$ such that $y(m)=\xi$. Moreover such a solution is unique.
\end{lemma}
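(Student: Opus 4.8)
The plan is to treat the two identities separately, working throughout with the equivalent Definition~\ref{EqEDdef}, since the sharp bounds \eqref{k1} and \eqref{k1a} are exactly what drives each argument. The two directions of the first identity are symmetric exploitations of the two estimates, while the second identity splits into an easy constructive existence argument and a more delicate uniqueness claim.

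For the first identity I would prove the two inclusions. If $\xi\in{\cal R}P(m)$, then $\Phi(k,m)\xi=\Phi(k,m)P(m)\xi$, and \eqref{k1} gives $|\Phi(k,m)\xi|\le Ke^{-\alpha(k-m)}|\xi|$, so the supremum is finite. Conversely, suppose $\sup_{k\ge m}|\Phi(k,m)\xi|=:B<\infty$ and set $\eta=(I-P(m))\xi\in{\cal N}P(m)$. By invariance $\Phi(k,m)\eta=(I-P(k))\Phi(k,m)\xi$, so \eqref{k0a} yields $|\Phi(k,m)\eta|\le MB$ for all $k\ge m$; on the other hand \eqref{k1a} gives $|\Phi(k,m)\eta|\ge K^{-1}e^{\alpha(k-m)}|\eta|$. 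Letting $k\to\infty$ forces $\eta=0$, i.e. $\xi=P(m)\xi\in{\cal R}P(m)$.

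For the second part, existence is constructive: given $\xi\in{\cal N}P(m)$, I would use the invertibility of $\Phi(m,k):{\cal N}P(k)\to{\cal N}P(m)$ (here $m\ge k$) to set $y(k)=\Phi(k,m)\xi\in{\cal N}P(k)$ for $k\le m$, where $\Phi(k,m)$ denotes the inverse. One checks $y$ solves \eqref{lineq} by applying the injective map $\Phi(m,k+1)$ to both $A(k)y(k)$ and $y(k+1)$, and \eqref{k1a} read from $k$ to $m$ gives $|y(k)|\le Ke^{\alpha(k-m)}|\xi|$, which is bounded and in fact decays as $k\to-\infty$. For the reverse inclusion, let $y$ be any bounded solution on $(-\infty,m]$ with $y(m)=\xi$; writing $P(m)\xi=\Phi(m,k)P(k)y(k)$ and using \eqref{k1} together with $|P(k)y(k)|\le M\sup|y|$ gives $|P(m)\xi|\le KMe^{-\alpha(m-k)}\sup|y|\to0$ as $k\to-\infty$, so $\xi\in{\cal N}P(m)$.

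The delicate point is uniqueness. If $y_1,y_2$ are two bounded solutions with $y_1(m)=y_2(m)$, then $z=y_1-y_2$ is a bounded solution with $z(m)=0$. I would first show $z(k)\in{\cal R}P(k)$ for every $k\le m$: since $(I-P(m))z(m)=\Phi(m,k)(I-P(k))z(k)=0$ and $\Phi(m,k)$ is injective on ${\cal N}P(k)$, the nullspace component vanishes. The key estimate then comes from reading the contraction \eqref{k1} backwards: for $j\le k_0\le m$ we have $z(k_0)=\Phi(k_0,j)z(j)$ with $z(j)\in{\cal R}P(j)$, so $|z(k_0)|\le Ke^{-\alpha(k_0-j)}|z(j)|$, i.e. $|z(j)|\ge K^{-1}e^{\alpha(k_0-j)}|z(k_0)|$. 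Were $z(k_0)\ne0$, the right-hand side would blow up as $j\to-\infty$, contradicting the boundedness of $z$; hence $z(k_0)=0$ for every $k_0$, giving $y_1=y_2$. I expect this last step to be the main obstacle, precisely because $A(k)$ need not be invertible on ${\cal R}P(k)$, so uniqueness cannot be read off from backward invertibility and must instead be forced by combining the exponential estimate with the genuine backward solvability of $z$ and its boundedness.
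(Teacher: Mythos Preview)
Your proof is correct and follows essentially the same approach as the paper. The only cosmetic differences are: for the reverse inclusion in the first identity the paper uses the triangle inequality $|\Phi(k,m)\xi|\ge|\Phi(k,m)(I-P(m))\xi|-|\Phi(k,m)P(m)\xi|$ rather than projecting by invariance as you do; and for uniqueness the paper reverses your order, first showing $P(k)u(k)=0$ for any bounded solution (your contraction argument applied directly to $u$), then invoking invertibility of $\Phi(m,k)$ on ${\cal N}P(k)$ to conclude $u(k)=\Phi(k,m)\xi$, whereas you first kill the ${\cal N}P$ component of $z$ via injectivity and then run the contraction estimate on the remaining ${\cal R}P$ part.
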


\begin{proof} Consider first the case $J$ contains $[m,\infty)$. So, according to Definition \ref{EqEDdef}, we are assuming there is an invariant projection $P(k)$  and positive constants $M$, $K$ and $\alpha$ such that \eqref{k0}, \eqref{k0a}, \eqref{k1} and \eqref{k1a} hold   in $J$. We show that the range of $P(m)$ is
\[
V=\left\{\xi: \sup_{k\ge m}|\Phi(k,m)\xi|<\infty\right\}.
\]
It is clear from \eqref{k1} that the range of $P(m)$ is a subspace of $V$. Suppose $\xi\in V$. Then if $(I-P(m))\xi\neq 0$, by \eqref{k1} and \eqref{k1a},
\[\begin{array}{rl}
|\Phi(k,m)\xi|
&\ge |\Phi(k,m)(I-P(m))\xi|-|\Phi(k,m)P(m)\xi|\\ \\
& \ge K^{-1}e^{\alpha(k-m)}|(I-P(m))\xi|-Ke^{-\alpha(k-m)}|P(m)\xi| \\ \\
&\to \infty \quad{\rm as}\quad k\to\infty.
\end{array}\]
Hence $(I-P(m))\xi=0$ and so $\xi\in{\cal R}P(m)$. Thus the range of $P(m)$ is $V$, as claimed.
\medskip

Next consider the case $J$ contains $(-\infty,m]$. So we are assuming there is an invariant projection $P(k)$  and positive constants $M$, $K$ and $\alpha$ such that \eqref{k0}, \eqref{k0a}, \eqref{k1} and \eqref{k1a} hold.
In this case we define $V\subset\R^n$ as the subspace of those $\xi\in\R^n$ such that the equation
\[x(k+1)=A(k)x(k), \quad x(m)=\xi\]
has a solution bounded on $k\le m$.
Let $\xi\in {\cal N}P(m)$. Then with $\Phi(k,m)$ as the inverse of $\Phi(m,k):{\cal N}P(k)\to {\cal N}P(m)$, set
$y(k):=\Phi(k,m)\xi\in {\cal N}P(k)$. Let $A(j)^{-1}$ be the inverse of $A(j):{\cal N}P(j)\to {\cal N}P(j+1)$, $j=k,\ldots,m-1$. Then for $k<m$ we have:
\[\begin{array}{l}
y(k+1) = \Phi(k+1,m)\xi = [A(m-1)\cdot\ldots\cdot A(k+1)]^{-1}\xi =\\
A(k)[A(m-1)\cdot\ldots\cdot A(k)]^{-1}\xi = A(k)y(k).
\end{array}\]
Next it follows from \eqref{k1a} that
\[
|\xi|=|\Phi(m,k)\Phi(k,m)\xi|\ge K^{-1}e^{\alpha(m-k)}|\Phi(k,m)\xi|,\quad m\ge k.
\]
So $y(k)$ is a solution of \eqref{lineq} such that $|y(k)|\le Ke^{-\alpha(m-k)}|\xi| \le K|\xi|$ and $y(m)=\xi$. So ${\cal N}P(m)\subset V$.
\medskip

Conversely, suppose $\xi\in V$ and let
$y(k)$ be a bounded solution for $k\le m$ with $y(m)=\xi$. Then
\[ |P(m)y(m)|=|P(m)\Phi(m,k)y(k)|=|\Phi(m,k)P(k)y(k)|
\le KMe^{-\alpha(m-k)}|y(k)|.\]
Since the right side $\to 0$ as $k\to -\infty$, it
follows that $P(m)y(m)=0$ and so $\xi=y(m)\in {\cal N}P(m)$.
Hence $V\subset {\cal N}P(m)$. Thus
the nullspace of $P(m)$ is $V$, as claimed.
\medskip

We must also show that for all $\xi\in V={\cal N}P(m)$, there is a unique solution $u(k)$ defined and bounded for $k\le m$ such that $u(m)=\xi$. We emphasize that the uniqueness problem arises since $\Phi(m,k)$ might not be invertible in $\mathbb{R}^n$ for $m>k$. We already showed above that such a solution exists and is given by $y(k)=\Phi(k,m)\xi$, with $\Phi(k,m)$ as the inverse of $\Phi(m,k):{\cal N}P(k)\to {\cal N}P(m)$. To show it is unique, let  $u(k)$ be a solution defined and bounded for $k\le m$  with $u(m)=\xi\in{\cal N}P(m)$. Then $u(k)=\Phi(k,p)u(p)$ for $p\le k\le m$. So
\[
|P(k)u(k)|=|\Phi(k,p)P(p)u(p)|\le Ke^{-\alpha(k-p)}|P(p)u(p)|\to 0\;{\rm as}\; p\to -\infty.
\]
It follows that $P(k)u(k)=0$ for $k\le m$ so that
$u(k)\in{\cal N}P(k)$. Now $u(m)=\Phi(m,k)u(k)$. So, with $\Phi(k,m)$ as the inverse of $\Phi(m,k):{\cal N}P(k)\to {\cal N}P(m)$, $u(k)=\Phi(k,m)u(m)=\Phi(k,m)\xi=y(k)$. Thus we have the uniqueness.
\medskip

\end{proof}

  In the next proposition, we consider a system with an
	exponential dichotomy on $\Z_+$, $\Z_-$ or $\Z$ and determine the
	extent to which the corresponding projections are unique.

\begin{prop} \label{prop1}   Let \eqref{lineq} have an exponential
		dichotomy on an interval $J$ with projection $P(k)$. When $J=\Z_+$, the stable subspace,   that is, the range of $P(0)$, is uniquely defined. The nullspace of $P(0)$  can be any complement, that is, if $W$ is any complement to ${\cal R}(P(0))$, there is a unique invariant projection $Q(k)$ such that its nullspace at $k=0$ is $W$ and \eqref{lineq} has an exponential dichotomy with $Q(k)$ as projection;
	      moreover, such a $Q(k)$ has the same range as $P(k)$ for all $k\ge 0$.
\medskip

For $J=\Z_-$, the unstable subspace   or nullspace of $P(0)$  is uniquely defined. The
 range of $P(0)$  can be any complement, that is, if $W$ is any complement to ${\cal N}(P(0))$, there is a unique invariant projection $Q(k)$ such that its range at $k=0$ is $W$ and \eqref{lineq} has an exponential dichotomy with $Q(k)$ as projection;
  moreover, such a $Q(k)$ has the same nullspace as $P(k)$ for all $k\le 0$.
\medskip

For $\Z$, the projection is uniquely defined.
\end{prop}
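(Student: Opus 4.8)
The plan is to pin down the forced subspaces using Lemma \ref{lem0} and then, in the two half-line cases, construct the new projection explicitly and verify the dichotomy bounds. Uniqueness of the stable subspace on $\Z_+$ and of the unstable subspace on $\Z_-$ is immediate: Lemma \ref{lem0} at $m=0$ characterizes ${\cal R}P(0)$ (resp. ${\cal N}P(0)$) purely in terms of the coefficients $A(k)$, independently of the projection chosen. The case $J=\Z$ then follows at once, since Lemma \ref{lem0} pins down both ${\cal R}P(m)$ (initial values bounded for $k\ge m$) and ${\cal N}P(m)$ (initial values of solutions bounded for $k\le m$) intrinsically at every $m$; hence any projection realizing a dichotomy on $\Z$ has the prescribed range and nullspace and is therefore unique.

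For $J=\Z_+$, given a complement $W$ of $S:={\cal R}P(0)$, I would let $Q(k)$ be the projection with ${\cal R}Q(k)={\cal R}P(k)$ and ${\cal N}Q(k)=\Phi(k,0)(W)$. First one checks these are complementary: by \eqref{invPk4} the kernel of $\Phi(k,0)$ lies in $S$, so $\Phi(k,0)$ is injective on $W$ and $\dim\Phi(k,0)(W)=n-r$; and if $\Phi(k,0)w\in{\cal R}P(k)$ with $w\in W$ then \eqref{invPk2} forces $w\in S$, hence $w=0$, so ${\cal R}P(k)\cap\Phi(k,0)(W)=\{0\}$. Invariance is then Remark \ref{rem1}(ii), since $A(k)$ maps ${\cal R}P(k)$ into ${\cal R}P(k+1)$ and $\Phi(k,0)(W)$ into $\Phi(k+1,0)(W)$.

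The hard part will be the dichotomy bounds, above all the uniform boundedness of $Q(k)$, for which I would use the representation
\[ Q(k)-P(k)=\Phi(k,0)\,E\,\Phi(0,k)(I-P(k)),\qquad E:=Q(0)-P(0), \]
where $\Phi(0,k)$ is the inverse of $\Phi(k,0):{\cal N}P(0)\to{\cal N}P(k)$. Since $E$ maps ${\cal N}P(0)$ into $S$, the correction sandwiches a factor contracting on ${\cal N}P(k)$ by \eqref{k1a3} and one contracting on $S$ by \eqref{k1}; its norm is $O(e^{-2\alpha k})$, so $|Q(k)|\le M'$ for some constant $M'$. The contraction on ${\cal R}Q(k)={\cal R}P(k)$ is inherited from \eqref{k1}. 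For the expansion on ${\cal N}Q(m)$, I note that $I-P(m)$ maps ${\cal N}Q(m)$ isomorphically onto ${\cal N}P(m)$ with inverse $I-Q(m)$, whence $|(I-P(m))\xi|\ge (M')^{-1}|\xi|$ uniformly; feeding this into the expansion \eqref{k1a} of the ${\cal N}P$-part against the decay of the ${\cal R}P$-part gives \eqref{k1a} for $Q$ (after adjusting the constant for small $k-m$). Uniqueness is then forced: ${\cal R}Q(k)$ must be the stable subspace ${\cal R}P(k)$ by Lemma \ref{lem0}, ${\cal N}Q(0)=W$ is prescribed, and invariance with invertibility of $A(k)$ on nullspaces propagates ${\cal N}Q(k)=\Phi(k,0)(W)$ uniquely.

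For $J=\Z_-$ the construction is dual but, as the introduction stresses, cannot be obtained by reversing time. I would now fix ${\cal N}Q(k)={\cal N}P(k)$, take $Q(0)$ to project onto the given complement $W$ of ${\cal N}P(0)$ along ${\cal N}P(0)$, so that $E:=Q(0)-P(0)$ now maps $S={\cal R}P(0)$ into ${\cal N}P(0)$, and set
\[ Q(k)-P(k)=\Phi(k,0)\,E\,\Phi(0,k)P(k),\qquad k\le 0. \]
One checks $Q(k)^2=Q(k)$ using $P(k)\Phi(k,0)=0$ on ${\cal N}P(0)$, and the same sandwiching (now with both contracting factors carrying $e^{\alpha k}\le 1$) bounds $|Q(k)|$ uniformly for $k\le 0$. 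The contraction on ${\cal R}Q(m)$ still holds because for $\xi\in{\cal R}Q(m)$ one has $\Phi(k,m)\xi=\eta_k+(Q(k)-P(k))\eta_k$ with $\eta_k:=\Phi(k,m)P(m)\xi$ decaying and $Q(k)-P(k)$ uniformly bounded, while the expansion on ${\cal N}Q(m)={\cal N}P(m)$ comes straight from \eqref{k1a}. The genuinely new point for uniqueness concerns the range: writing ${\cal R}Q(k)$ as the graph over ${\cal R}P(k)$ of a map $S(k):{\cal R}P(k)\to{\cal N}P(k)$, invariance reads $A(k)S(k)x=S(k+1)A(k)x$ for $x\in{\cal R}P(k)$, and this determines $S(k)$ from $S(k+1)$ uniquely precisely because solving for $S(k)x\in{\cal N}P(k)$ only requires inverting $A(k)$ on ${\cal N}P(k)$, where it is invertible; the nullspaces are again forced by Lemma \ref{lem0}.
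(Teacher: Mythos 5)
Your proposal is correct in substance and follows the same architecture as the paper's proof: Lemma \ref{lem0} pins down the forced subspaces (and settles $\Z$), then one constructs $Q(k)$ explicitly and controls $Q(k)-P(k)$ by sandwiching a bounded operator between two contracting factors. Indeed your identities $Q(k)-P(k)=\Phi(k,0)E\Phi(0,k)(I-P(k))$ on $\Z_+$ and $Q(k)-P(k)=\Phi(k,0)E\Phi(0,k)P(k)$ on $\Z_-$ are exactly the paper's estimates \eqref{qpdiff} and \eqref{Q4}--\eqref{qpdiff1} in operator form, yielding the same $O(e^{-2\alpha k})$, resp.\ $O(e^{2\alpha k})$, decay. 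Three micro-steps are genuinely different and worth keeping: (a) you prove ${\cal R}P(k)\cap\Phi(k,0)(W)=\{0\}$ directly from \eqref{invPk2} and \eqref{invPk4}, avoiding the paper's induction on $k$; (b) on $\Z_-$ you define $Q(k)$ by a closed formula rather than, as the paper does, as the projection onto $\Phi(0,k)^{-1}(W)$ along ${\cal N}P(k)$ with a separate verification that these subspaces are complementary --- the two definitions agree, since applying $\Phi(0,k)$ to your graph $\{u+(Q(k)-P(k))u: u\in{\cal R}P(k)\}$ lands in $(I+E)({\cal R}P(0))=W$ and the dimensions match; (c) your uniqueness argument on $\Z_-$ via the graph map $S(k):{\cal R}P(k)\to{\cal N}P(k)$ and the backward recursion $S(k)x=(A(k)|_{{\cal N}P(k)})^{-1}S(k+1)A(k)x$ is a real alternative to the paper's appeal to \eqref{invPk2}, and it isolates nicely why only invertibility of $A(k)$ on nullspaces is needed. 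One routine omission: you should check invariance $A(k)Q(k)=Q(k+1)A(k)$ of your $\Z_-$ formula (it follows from $A(k)\Phi(k,0)=\Phi(k+1,0)$ on ${\cal N}P(0)$ and $\Phi(0,k+1)A(k)P(k)=\Phi(0,k)P(k)$); you use it tacitly when you write $\Phi(k,m)\xi=\eta_k+(Q(k)-P(k))\eta_k$.

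The one step that fails as literally stated is the expansion estimate on ${\cal N}Q(m)$ for $\Z_+$. Playing the growth of the ${\cal N}P$-part ``against the decay of the ${\cal R}P$-part'' gives only $|\Phi(k,m)\xi|\ge \bigl[K^{-1}(1+M')^{-1}e^{\alpha(k-m)}-KMe^{-\alpha(k-m)}\bigr]|\xi|$, whose bracket is in general negative for small $k-m$; no ``adjusting of constants'' extracts a positive lower bound from a vacuous inequality there, and you cannot bridge the initial segment by an a priori bound on $\Phi(k,m)$ restricted to nullspaces, since the dichotomy only bounds the inverse (via \eqref{newk1a}/\eqref{k1a}) and $A(k)$ is not assumed bounded. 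The repair is one line and uses exactly the ingredients you already listed, and is what the paper does: for $\xi\in{\cal N}Q(m)$, by \eqref{k0a}, invariance, \eqref{k1a} and your isomorphism fact, $M|\Phi(k,m)\xi|\ge|(I-P(k))\Phi(k,m)\xi|=|\Phi(k,m)(I-P(m))\xi|\ge K^{-1}(1+M')^{-1}e^{\alpha(k-m)}|\xi|$, valid for all $k\ge m$ with no case split (note also that the inverse of $I-P(m)$ on ${\cal N}Q(m)$ is $I-Q(m)$, so the correct constant is a bound for $|I-Q(m)|$, i.e.\ $1+M'$ rather than $M'$).
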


\begin{proof} Consider first $J=\Z_+$. So, according to Definition \ref{EqEDdef}, we are assuming there is an invariant projection $P(k)$  and positive constants $M$, $K$ and $\alpha$ such that \eqref{k0}, \eqref{k0a}, \eqref{k1} and \eqref{k1a} hold
in $\Z_+$. That the stable subspace is uniquely defined follows from Lemma \ref{lem0}.
\medskip

Now we show the other subspace can be any complement $W$. We define $W(k)=\Phi(k,0)(W)$. It is obvious that
$A(k):W(k)\to W(k+1)$. First we show by induction on $k$ that ${\cal R}P(k)\cap W(k)=\{0\}$ for $k\ge 0$. This is true for $k=0$. Assume it is true for some $k\ge 0$ and let $x\in {\cal R}P(k+1)\cap W(k+1)$. Then there exists $y\in W(k)$ such that $x=A(k)y$. Then, by invariance, $A(k)P(k)y=P(k+1)A(k)y=P(k+1)x$, so that
\[
A(k)(I-P(k))y=x-P(k+1)x=0,
\]
from which it follows that $(I-P(k))y=0$, because $(I-P(k))y\in{\cal N}P(k)$ and $A(k)$ is invertible on ${\cal N}P(k)$. So $y\in{\cal R}P(k)\cap W(k)$. Hence $y=0$ and therefore $x=0$ also. Thus ${\cal R}P(k)\cap W(k)=\{0\}$ for $k\ge 0$. Next we show that $A(k)$ is one to one on $W(k)$. For suppose $x\in W(k)$ satisfies $A(k)x=0$.
Then from \eqref{invPk4}, $x \in {\cal N}\Phi(k+1,k) \subset {\cal R} P(k)$ so that  $x=0$. Therefore $A(k)$ maps $W(k)$ bijectively on to $W(k+1)$ and so all the $W(k)$ have the same dimension as $W$ and therefore ${\cal R}P(k)\oplus W(k)=\R^n$ for $k\ge 0$.
\medskip

Then we define $Q(k)$ as the projection having the same range as $P(k)$ and nullspace $W(k)$. Since ${\cal R}P(k)$ and $W(k)$ are invariant under $A(k)$ it follows from Remark \ref{rem1}-ii) that $Q(k)$ is an invariant family of projections. Moreover we have seen that $A(k):W(k)={\cal N}Q(k)\to W(k+1)={\cal N}Q(k+1)$ is invertible.

\medskip

Now we prove the boundedness of the projection and the dichotomy inequalities with $Q(k)$ as the projection.
Observe that for $k\ge m\ge 0$ and $\xi\in{\cal R}Q(m)$, using
\eqref{k0a} and \eqref{k1},
\begin{equation}\label{qst} |\Phi(k,m)\xi|= |\Phi(k,m)P(m)\xi|\le KMe^{-\alpha(k-m)}|\xi|.
\end{equation}
Next for all $\xi$ and $k\ge 0$,
\begin{equation}\label{qpdiff}\begin{array}{rl}
|[Q(k)-P(k)]\xi|
&=|Q(k)(I-P(k))\xi|\quad {\rm using}\quad Q(k)P(k)=P(k)\\ \\
&=|Q(k)\Phi(k,0)\Phi(0,k)(I-P(k))\xi|,\\ \\
&\qquad {\rm where\;} \Phi(0,k)\;{\rm is\; the\; inverse\; of\;} \Phi(k,0):{\cal N}P(0)\to{\cal N}P(k)\\ \\
&=|\Phi(k,0)Q(0)\Phi(0,k)(I-P(k))\xi|\\ \\
&\le Ke^{-\alpha k}|Q(0)|Ke^{-\alpha k}|(I-P(k))\xi|\quad {\rm using\;} \eqref{k1}, \eqref{k1a3}\\ \\
&\le K^2M|Q(0)|e^{-2\alpha k}|\xi|.
\end{array}\end{equation}
In particular, this implies that for $k\ge 0$,
\begin{equation}\label{qbd}|Q(k)|\le M_1=M+K^2M|Q(0)|.\end{equation}
Next note, using $Q(k)P(k)=P(k)$ again, that
\[ |(I-Q(k))x|=|(I-Q(k))(I-P(k))x| \le |I-Q(k)||(I-P(k))x|\le (1+M_1)|(I-P(k))x|\]
so that for $k\ge 0$
\begin{equation}\label{pqeq} |(I-P(k))x|\ge (1+M_1)^{-1}|(I-Q(k))x|.\end{equation}
Finally if $k\ge m$ and $\xi\in{\cal N}Q(m)$,
\[\begin{array}{rl}
|\Phi(k,m)\xi|
&\ge \displaystyle\frac{|(I-P(k))\Phi(k,m)\xi|}{|I-P(k)|}\\ \\
&\ge \displaystyle\frac{|\Phi(k,m)(I-P(m))\xi|}{M}\\ \\
&\ge \displaystyle\frac{K^{-1}e^{\alpha(k-m)}\,|(I-P(m))\xi|}{M}\quad{\rm using\;}\eqref{k1a3}\\ \\
&\ge \displaystyle\frac{1}{KM(1+M_1)}e^{\alpha(k-m)} \,|(I-Q(m))\xi|\quad{\rm using\;}\eqref{pqeq}\\ \\
&=\displaystyle\frac{1}{KM(1+M_1)}e^{\alpha(k-m)} \,|\xi|.
\end{array}\]
The boundedness of the projection was proved in \eqref{qbd} and the dichotomy inequalities   as in the equivalent definition  follow from the last inequality and \eqref{qst}. Hence the equation has an exponential dichotomy on $\Z_+$ with $Q(k)$ as the projection.   Note that the range of $Q(k)$ for any $k\ge 0$ must equal the range of $P(k)$, by Lemma \ref{lem0}. Also the nullspace must be ${\cal N}Q(k)=\Phi(k,0){\cal N}Q(0)=\Phi(k,0)(W)$. So $Q(k)$ is unique.
This completes the proof for $J=\Z_+$.
\medskip	

Now we turn to $\Z_-$. So we are assuming there is an invariant projection $P(k)$  and positive constants $M$, $K$ and $\alpha$ such that \eqref{k0}, \eqref{k0a}, \eqref{k1} and \eqref{k1a} hold. That the unstable subspace is uniquely defined follows from Lemma \ref{lem0}.
\medskip

Now we show that the range of $Q(0)$ can be taken as an arbitrary complement of the nullspace of $P(0)$.
Let $W$ be any complement of ${\cal N}P(0)$. We define
\[
W(k)=\{\xi: \Phi(0,k)\xi\in W\}=\Phi(0,k)^{-1}(W),\quad k\le 0.
\]
It is easy to see that $W(k)$ is invariant. Indeed let $x\in W(k)$   for $k\le -1$.
 Then $\Phi(0,k)x\in W$ that is $\Phi(0,k+1)A(k)x\in W$. But then $A(k)x\in W(k+1)$.

Next we prove that $W(k)\cap {\cal N}P(k)=\{0\}$
  for $k\le 0$. For let $x\in W(k)\cap {\cal N}P(k)$. Then $\Phi(0,k)x\in W$ and $\Phi(0,k)x\in {\cal N}P(0)$. So $\Phi(0,k)x=0$ since $W\cap {\cal N}P(0)=\{0\}$. Hence $x=0$ since $\Phi(0,k):{\cal N}P(k)\to{\cal N}P(0)$ is invertible. Thus $W(k)\cap {\cal N}P(k)=\{0\}$.

Next we show that $W(k)+{\cal N}P(k)=\R^n$. For let $x\in\R^n$. Then
\[
\Phi(0,k)x=w+u,\quad {\rm where}\quad w\in W,\; u\in {\cal N}P(0).
\]
We can write $u=\Phi(0,k)v$, where $v\in {\cal N}P(k)$. Then $w=\Phi(0,k)(x-v)$. This means that $x-v\in W(k)$. So
\[
x=x-v+v,\quad {\rm where}\quad x-v\in W(k),\; v\in {\cal N}P(k).
\]
It follows then that $W(k)+{\cal N}P(k)=\R^n$ and hence that
\[
W(k)\oplus{\cal N}P(k)=\R^n.
\]
Now we take $Q(k)$ as the projection with range $W(k)$ and nullspace ${\cal N}P(k)$. It follows from Remark \ref{rem1}-ii)
that $Q(k)$ is an invariant projection such that $A(k)$ maps ${\cal N}Q(k)$ onto ${\cal N}Q(k+1)$.
\medskip

Now we prove the boundedness of the projection and the dichotomy inequalities with $Q(k)$ as projection. First note that for $0\ge k\ge m$, $\Phi(k,m)\xi=\Phi(k,0)\Phi(0,m)\xi$ for all $\xi\in{\cal N}P(m)$ and that for all $\xi$, $[P(m)-Q(m)]\xi\in{\cal N}P(m)$. It follows that for $0\ge k\ge m$ and all $\xi$,
\begin{equation} \label{Q4} \begin{array}{rl}
|\Phi(k,m)[P(m)-Q(m)]\xi|
&=|\Phi(k,0)\Phi(0,m)[P(m)-Q(m)]\xi|\\ \\
&=|\Phi(k,0)[P(0)-Q(0)]\Phi(0,m)\xi|\quad{\rm by \; invariance}\\ \\
&=|\Phi(k,0)[P(0)-Q(0)]P(0)\Phi(0,m)\xi|\\ \\
& \qquad{\rm using\;}
Q(0)P(0)=Q(0)\\ \\
&=|\Phi(k,0)[P(0)-Q(0)]\Phi(0,m)P(m)\xi|\\ \\
&\le Ke^{\alpha k}|P(0)-Q(0)|Ke^{\alpha m}|P(m)\xi|\\ \\
&\qquad {\rm using} \; (\ref{k1}), (\ref{k1a3}){\;\rm and\;}{\cal R}(P(0)-Q(0))
\in{\cal N}P(0)\\\\
& \le K^2Me^{\alpha(k+m)}|P(0)-Q(0)|\,|\xi| \quad {\rm using\;} (\ref{k0a}).
\end{array}\end{equation}
 In particular, with $k=m$, we see that we get for $k\le 0$
\begin{equation}\label{qpdiff1} |P(k)-Q(k)|\le K^2Me^{2\alpha k}|P(0)-Q(0)|
\end{equation}
so that
\begin{equation}\label{Q1} |Q(k)|,\,|I-Q(k)| \le M+K^2M
|P(0)-Q(0)|.\end{equation}
Next for $0\ge k\ge m$ and $\xi\in{\cal N}Q(k)={\cal N}P(k)$, it follows from
\eqref{k1a} that
\begin{equation}\label{Q2}|\Phi(k,m)\xi| \ge K^{-1}e^{\alpha(k-m)}|\xi|.\end{equation}
Finally for $0\ge k\ge m$ and $\xi\in{\cal R}Q(m)$, using \eqref{k0}, \eqref{k0a}
and \eqref{Q4},
\begin{equation}\label{Q3}\begin{array}{rl}|\Phi(k,m)\xi|
&\le |\Phi(k,m)P(m)\xi|+|\Phi(k,m)(P(m)-Q(m))\xi|\\ \\
&\le KMe^{-\alpha(k-m)}|\xi| + K^2Me^{\alpha(m+k)}|P(0)-Q(0)|\,|\xi|\\ \\
&\le KM[1+K
|P(0)-Q(0)|]e^{-\alpha(k-m)}|\xi|.
\end{array}\end{equation}
\eqref{Q1}, \eqref{Q2} and \eqref{Q3} show we have an exponential dichotomy with $Q(k)$ as the
projection.   Note that the nullspace of $Q(k)$ for any $k\ge 0$ must equal
the nullspace of $P(k)$, by Lemma \ref{lem0}. Also, using \eqref{invPk2}, the range
of $Q(k)$ must be $\Phi(0,k)^{-1}{\cal R}Q(0)=\Phi(0,k)^{-1}(W)=W(k)$. So $Q(k)$ is unique.
This completes the proof for $J=\Z_-$.
\medskip

The uniqueness of the projection in the case of $\Z$ follows from the first two parts. So the proof of Proposition \ref{prop1} is concluded.
\end{proof}

\begin{remark}\label{newrem} It follows from \eqref{qpdiff} (resp.
		\eqref{qpdiff1}) that $|Q(k)-P(k)|\to 0$ exponentially fast
		as $k\to\infty$ (resp. $k\to -\infty$).		
\end{remark}

From Proposition \ref{prop1}, we deduce a corollary which has been proved in a different way in \cite{BDV} (see Theorems 2.3 and 2.9). Proposition 4.3 in \cite{PC1} considers the more general situation when the ranks 
on $\Z_+$ and $\Z_-$ may be different.
Proposition 2.10 in \cite{BDV} also considers a more general situation.  Our aim here was simply to consider those systems which have dichotomies on both half-axes and ask what has to be added to ensure dichotomy on the whole axis. 

\begin{corollary} \label{cor1} System (\ref{lineq}) has an exponential dichotomy on $\Z$\ if and only if it has an exponential dichotomy on $\Z_+$ and $\Z_-$, the respective ranks are the same and the stable subspace on $\Z_+$ and the unstable subspace on $\Z_-$ intersect in $\{0\}$ at $k=0$.
\end{corollary}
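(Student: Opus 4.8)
The plan is to prove the two implications separately; the content is almost entirely in the ``if'' direction, which I would build on Proposition~\ref{prop1}. For the ``only if'' direction I would simply restrict: if \eqref{lineq} has an exponential dichotomy on $\Z$ with projection $P(k)$, then the same $P(k)$ gives dichotomies on both $\Z_+$ and $\Z_-$, so the ranks automatically agree. By Lemma~\ref{lem0} the stable subspace on $\Z_+$ is ${\cal R}P(0)$ and the unstable subspace on $\Z_-$ is ${\cal N}P(0)$, and since $P(0)$ is a projection ${\cal R}P(0)\cap{\cal N}P(0)=\{0\}$. This disposes of the forward implication.

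For the ``if'' direction, denote by $P_+(k)$ ($k\ge 0$) and $P_-(k)$ ($k\le 0$) the two projections, both of rank $r$, and set $S={\cal R}P_+(0)$, $U={\cal N}P_-(0)$, so the hypothesis reads $S\cap U=\{0\}$. Since the ranks coincide, $\dim S=r$ and $\dim U=n-r$, whence $S\cap U=\{0\}$ upgrades to $S\oplus U=\R^n$; this dimension count is exactly where the equal-rank assumption enters, and without it the construction below would fail. Now $U$ is a complement of $S={\cal R}P_+(0)$ and $S$ is a complement of $U={\cal N}P_-(0)$, so I would apply Proposition~\ref{prop1} twice: on $\Z_+$ with $W=U$ to obtain a unique invariant projection $Q(k)$, $k\ge 0$, realizing a dichotomy with ${\cal R}Q(k)={\cal R}P_+(k)$ and ${\cal N}Q(0)=U$; and on $\Z_-$ with $W=S$ to obtain a unique invariant projection $\wt Q(k)$, $k\le 0$, realizing a dichotomy with ${\cal N}\wt Q(k)={\cal N}P_-(k)$ and ${\cal R}\wt Q(0)=S$. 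At $k=0$ both $Q(0)$ and $\wt Q(0)$ have range $S$ and nullspace $U$, hence $Q(0)=\wt Q(0)$, and the glued family
\[ R(k)=\begin{cases} Q(k), & k\ge 0,\\ \wt Q(k), & k\le 0,\end{cases} \]
is a well-defined invariant projection of constant rank $r$ on all of $\Z$ with $A(k)$ invertible on ${\cal N}R(k)$ throughout, invariance being a condition on consecutive indices only.

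It then remains to verify the inequalities of Definition~\ref{EqEDdef} for $R(k)$. For $0\le m\le k$ these are the $\Z_+$ estimates for $Q$ and for $m\le k\le 0$ they are the $\Z_-$ estimates for $\wt Q$, so the only new case is the crossing one $m<0<k$, which I would handle by factoring $\Phi(k,m)=\Phi(k,0)\Phi(0,m)$. If $\xi\in{\cal R}R(m)$ then by invariance $\Phi(0,m)\xi\in{\cal R}R(0)=S$, so chaining the $\Z_-$ contraction from $m$ to $0$ with the $\Z_+$ contraction on $S$ from $0$ to $k$ yields $|\Phi(k,m)\xi|\le K_+K_-e^{-\alpha_+ k}e^{\alpha_- m}|\xi|$; if $\xi\in{\cal N}R(m)$ then $\Phi(0,m)\xi\in{\cal N}R(0)=U$ and the corresponding expansion estimates give $|\Phi(k,m)\xi|\ge (K_+K_-)^{-1}e^{\alpha_+ k}e^{-\alpha_- m}|\xi|$, where $\alpha_\pm, K_\pm$ are the constants on the two half-axes. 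Setting $\alpha=\min\{\alpha_+,\alpha_-\}$ and using $k\ge 0$, $m<0$ collapses these into the required $Ke^{-\alpha(k-m)}|\xi|$ and $K^{-1}e^{\alpha(k-m)}|\xi|$, while boundedness of $R(k)$ on $\Z$ is immediate from boundedness on each half-axis. I do not anticipate a serious obstacle here: the conceptual core is the dimension count forcing $S\oplus U=\R^n$ together with the matching $Q(0)=\wt Q(0)$, and the crossing estimate is a routine splitting at $k=0$.
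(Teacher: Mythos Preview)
Your proposal is correct and follows essentially the same route as the paper's proof: both use Proposition~\ref{prop1} on each half-axis to adjust the projections so that they coincide at $k=0$ (after the dimension count forces $S\oplus U=\R^n$), glue them, and then handle the crossing case $m<0<k$ by factoring $\Phi(k,m)=\Phi(k,0)\Phi(0,m)$ and concatenating the two half-axis estimates. The only cosmetic difference is that you keep track of possibly distinct constants $K_\pm,\alpha_\pm$ before taking $\alpha=\min\{\alpha_+,\alpha_-\}$, whereas the paper passes to common constants at the outset.
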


\begin{proof} Clearly the conditions are necessary.
 \medskip

 For the sufficiency, suppose (\ref{lineq}) has an exponential dichotomy
 on $\Z_+$ and $\Z_-$, the  respective ranks are the same and the stable subspace on $\Z_+$ and the unstable subspace on $\Z_-$ intersect in $\{0\}$ at $k=0$. According to Proposition \ref{prop1}, at $k=0$, we can take the unstable subspace on $\Z_{+}$ to be the unstable subspace on $\Z_{-}$ and the stable subspace on $\Z_{-}$ to be  the stable subspace on $\Z_{+}$.
 This means we have the same invariant projection $P(0)$ on both $\Z_+$ and $\Z_-$
 and we will have bounds as in \eqref{k0a} on $\Z$.   Also $A(k)$ will map ${\cal N}P(k)$ bijectively on to
 		${\cal N}P(k+1)$ for $k\in\Z$.
 Next there exist positive constants $K$ and $\alpha$ such that for $\xi\in{\cal R}P(m)$
 and $\eta\in{\cal N}P(m)$,
 \[ |\Phi(k,m)\xi|\le Ke^{-\alpha(k-m)}|\xi|, \quad
 |\Phi(k,m)\eta| \ge K^{-1}e^{\alpha(k-m)}|\eta|\]
 for $k\ge m\ge 0$ and $0\ge k\ge m$. Then if $k\ge 0\ge m$ and $\xi\in{\cal R}P(m)$,
 \[ |\Phi(k,m)\xi|=|\Phi(k,0)\Phi(0,m)\xi|\le Ke^{-\alpha k}Ke^{-\alpha(- m)}|\xi|
  =K^2e^{-\alpha(k-m)}|\xi|
 \]
 and if $\eta\in{\cal N}P(m)$,
 \[|\Phi(k,m)\eta|=|\Phi(k,0)\Phi(0,m)\eta|
 \ge K^{-1}e^{\alpha k}K^{-1}e^{-\alpha m}|\eta|=K^2e^{\alpha(k-m)}|\eta|.\]
  It follows that (\ref{lineq}) has an exponential dichotomy on $\Z$.

 \end{proof}

  Now we prove some further results about the projections for dichotomies
	on a half-axis.

\begin{prop}\label{propnew} {\rm (i)} Assume \eqref{lineq} has an exponential dichotomy on $\Z_+$ with projection $P(k)$. Fix $m \in \Z_+$
	and let $W$ be any complement of ${\cal R}P(m)$. Then there is an invariant projection
	$Q(k)$ such that ${\cal R}Q(m)={\cal R}P(m)$, ${\cal N}Q(m)=W$ and \eqref{lineq} admits an exponential dichotomy on $\Z_+$  with projection $Q(k)$.
	\medskip

{\rm (ii)} Assume \eqref{lineq} has an exponential dichotomy on $\Z_-$ with projection $P(k)$. Fix $m \in \Z_-$ and let $W$ be any complement of ${\cal N}P(m)$ such that ${\cal N}(\Phi(0,m))\subset W$. Then there is an invariant projection $Q(k)$ such that ${\cal N}Q(m)={\cal N}P(m)$, ${\cal R}Q(m)=W$ and \eqref{lineq} admits an exponential dichotomy on $\Z_-$  with projection $Q(k)$.

\end{prop}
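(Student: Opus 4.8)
The plan is to treat (i) and (ii) by one template. By Lemma~\ref{lem0}, on a half line one of the two subspaces of any dichotomy projection is intrinsic, hence forced to coincide with that of $P$; only the complementary subspace is free, and the task is to propagate the prescribed datum $W$ at the index $m$ to all indices while keeping invariance and the two exponential bounds. For (i) on $\Z_+$ the range is forced, ${\cal R}Q(k)={\cal R}P(k)$ for all $k\ge0$, and we must realize ${\cal N}Q(m)=W$; for (ii) on $\Z_-$ the nullspace is forced, ${\cal N}Q(k)={\cal N}P(k)$, and we must realize ${\cal R}Q(m)=W$. In each case propagation away from the origin is harmless, while propagation toward the origin is delicate because it runs against the direction in which $A(k)$ may drop rank.

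For (i) I would first push $W$ forward: put ${\cal N}Q(k)=\Phi(k,m)(W)$ for $k\ge m$. As ${\cal N}(\Phi(k,m))\subseteq{\cal R}P(m)$ by \eqref{invPk4} and $W$ meets ${\cal R}P(m)$ only in $0$, the map $\Phi(k,m)$ is injective on $W$, so ${\cal N}Q(k)$ has the right dimension; the induction from the proof of Proposition~\ref{prop1}, now based at $m$, gives ${\cal R}P(k)\cap{\cal N}Q(k)=\{0\}$ and hence ${\cal R}P(k)\oplus{\cal N}Q(k)=\R^n$. Taking $Q(k)$ with range ${\cal R}P(k)$ and nullspace ${\cal N}Q(k)$, Remark~\ref{rem1}-ii) yields invariance and the dichotomy estimates for $k\ge m$ follow by the computations of Proposition~\ref{prop1}; this already gives a dichotomy on $[m,\infty)$.

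The remaining and harder task is to define $Q(k)$ for $0\le k<m$. Writing each candidate nullspace as a graph ${\cal N}Q(k)=\{v+S(k)v:\ v\in{\cal N}P(k)\}$ with $S(k):{\cal N}P(k)\to{\cal R}P(k)$, invariance $A(k)Q(k)=Q(k+1)A(k)$ is equivalent to $A(k)|_{{\cal R}P(k)}\,S(k)=S(k+1)\,A(k)|_{{\cal N}P(k)}$. Since $A(k)|_{{\cal N}P(k)}$ is invertible this propagates $S(m)$ forward uniquely but must be solved backward for $S(m-1),\dots,S(0)$, and I expect this backward solve to be the main obstacle: at each step it demands that the image of $S(k+1)$ lie in $A(k)({\cal R}P(k))$, which in the noninvertible case may be a proper subspace of ${\cal R}P(k+1)$. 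The freedom of $S(k)$ on the kernel of $A(k)|_{{\cal R}P(k)}$ is exactly the nonuniqueness of $Q(k)$ for $k<m$ announced in the introduction, while the accompanying image condition, pulled back to $m$, is the $\Z_+$-analogue of the hypothesis imposed in (ii) and is the point to be verified carefully.

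Part (ii) is symmetric, fixing ${\cal N}Q(k)={\cal N}P(k)$ and propagating the range. For $k\le m$, away from the origin, I would pull $W$ back by ${\cal R}Q(k)=\Phi(m,k)^{-1}(W)$ and argue exactly as in the $\Z_-$ part of Proposition~\ref{prop1}, based at $m$, that this is a complement of ${\cal N}P(k)$ giving an invariant $Q(k)$ with the required bounds. For $m\le k\le0$, toward the origin, writing ${\cal R}Q(k)=\{u+T(k)u:\ u\in{\cal R}P(k)\}$ with $T(k):{\cal R}P(k)\to{\cal N}P(k)$, invariance becomes $T(k+1)\,A(k)|_{{\cal R}P(k)}=A(k)|_{{\cal N}P(k)}\,T(k)$; the forward solve is well defined on the image of $A(k)|_{{\cal R}P(k)}$ precisely when $T(k)$ annihilates ${\cal N}(\Phi(k+1,k))$. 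Here the hypothesis ${\cal N}(\Phi(0,m))\subset W$ does the work: it forces $T(m)$ to vanish on ${\cal N}(\Phi(0,m))$, and propagating forward one keeps $T(k)$ vanishing on ${\cal N}(\Phi(0,k))\supseteq{\cal N}(\Phi(k+1,k))$ by choosing the free part of $T(k+1)$ to vanish there, so every forward solve toward $0$ succeeds. With the graphs fixed, boundedness of the projections and the two exponential estimates follow as in Proposition~\ref{prop1}.
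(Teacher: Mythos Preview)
Your route differs from the paper's. Rather than propagating $Q(k)$ index by index with graph maps $S(k)$, $T(k)$, the paper reduces both parts to the $m=0$ case already handled in Proposition~\ref{prop1}. For (i) it picks $V$ with $V\oplus{\cal N}(\Phi(m,0))=\Phi(m,0)^{-1}(W)$, checks $V\oplus{\cal R}P(0)=\R^n$, applies Proposition~\ref{prop1} with ${\cal N}Q(0)=V$, and reads off ${\cal N}Q(m)=\Phi(m,0)(V)=W$. For (ii) it pushes $W$ forward to $X=\Phi(0,m)W$, uses the hypothesis ${\cal N}(\Phi(0,m))\subset W$ to get $X\cap{\cal N}P(0)=\{0\}$, extends $X$ to a complement $V$ of ${\cal N}P(0)$, applies Proposition~\ref{prop1} with ${\cal R}Q(0)=V$, and concludes ${\cal R}Q(m)=\Phi(0,m)^{-1}(V)\supset W$ by \eqref{invPk2}, hence $=W$ by dimension. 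No inductive bookkeeping, no compatibility checks, and the dichotomy estimates on the whole half-line come for free from Proposition~\ref{prop1}. Your scheme for (ii) can be completed along the lines you sketch, but it is longer and you still owe the estimates on the finite stretch $m\le k\le 0$.

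More importantly, the image obstruction you flag in (i) is not a technicality to be ``verified carefully'': it is real, and part (i) as stated is \emph{false}. Any dichotomy projection $Q$ on $\Z_+$ satisfies ${\cal N}Q(m)=\Phi(m,0)({\cal N}Q(0))\subset{\cal R}(\Phi(m,0))$, so the condition $W\subset{\cal R}(\Phi(m,0))$ is necessary. Concretely, take $n=2$, $A(0)=\left(\begin{smallmatrix}0&0\\0&2\end{smallmatrix}\right)$, $A(k)=\left(\begin{smallmatrix}1/2&0\\0&2\end{smallmatrix}\right)$ for $k\ge1$, $P(k)=\left(\begin{smallmatrix}1&0\\0&0\end{smallmatrix}\right)$: this is a dichotomy on $\Z_+$, but with $m=1$ and $W={\rm span}(e_1+e_2)$ one has $W\not\subset{\cal R}(A(0))={\rm span}(e_2)$, so no $Q$ with ${\cal N}Q(1)=W$ can exist. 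The paper's own proof has the same gap: its claim that $\Phi(m,0)$ maps $V$ \emph{onto} $W$ silently uses $W\subset{\cal R}(\Phi(m,0))$. Under that missing hypothesis both the paper's reduction and your backward solve go through, yielding the correct $\Z_+$ counterpart of (ii); without it your instinct that an analogue of the hypothesis in (ii) is required was exactly right.
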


\begin{proof}
(i) The $m=0$ case follows from Proposition \ref{prop1}. So let $m>0$. Let dim$({\cal R} P(k))=r$ so that dim$(W)=\textrm{dim}(
{\cal N} P(k))=n-r$ for any $k \ge 0$. Let $V$ be a linear space such that
$$V \oplus {\cal N} (\Phi(m,0))=  \Phi(m,0)^{-1}(W).$$
Then $\Phi(m,0)$ restricted to $V$ is bijective from $V$ to $W$. In fact it is one to one since $V \cap {\cal N} (\Phi(m,0))= \{0 \}$, and it is onto since, if $w \in W$, there are $v \in V$ and $y \in {\cal N} (\Phi(m,0))$ such that $w=\Phi(m,0)(v+y)=\Phi(m,0)v$.
So dim$(V)= \textrm{dim}(W)=n-r$. Furthermore $V \cap {\cal R}P(0)= \{ 0 \}$: in fact if $v \in V \cap {\cal R}P(0)$ then
$\Phi(m,0)v=w \in W$ by construction and $w \in {\cal R}P(m)$ by invariance, see \eqref{invPk1}, thus $w \in W \cap {\cal R}P(m)
=\{ 0\}$; whence $v \in {\cal N}(\Phi(m,0))$, but since $v \in V$ we get $v=0$. Therefore $V \oplus {\cal R}P(0)= \R^n$.
From Proposition \ref{prop1} we see that there is a unique projection $Q(k)$ such that ${\cal R}Q(0)={\cal R}P(0)$ and ${\cal N}Q(0)=V$, and \eqref{lineq} admits an exponential dichotomy with projection $Q(k)$. Now from   Lemma \ref{lem0}  we get ${\cal R}Q(m)={\cal R}P(m)$ and from \eqref{invPk1} we get ${\cal N}Q(m)=\Phi(m,0)V=W$ and (i) is proved.
   \medskip

   (ii)   Let $X= \Phi(0,m)W$. Note that
   \[X\cap{\cal N}P(0)=\{0\}, \]
   since if $x=\Phi(0,m)w$, $w\in W$, is in ${\cal N}P(0)=\Phi(0,m)({\cal N}P(m))$,
   then $x=\Phi(0,m)u$, where $u\in {\cal N}P(m)$. Then $u-w\in {\cal N}(\Phi(0,m))\subset W$ so that $u\in W\cap {\cal N}P(m)$. Hence $u=0$ and so $x=0$ also.
   \medskip

   Then we can let $V$ be a linear space such that
   $$V\oplus {\cal N}P(0)= \R^n$$
   and $X\subset V$.  From Proposition
   \ref{prop1} there is a unique projection $Q(k)$ such that
   $${\cal N}Q(k)={\cal N}P(k) \, , \qquad {\cal R}Q(0)=V$$
   for any $k \in \Z_-$, and \eqref{lineq} admits an exponential dichotomy with projection $Q(k)$.
   In particular ${\cal N}Q(m)={\cal N}P(m)$; further    from \eqref{invPk2} we see that
   $${\cal R}Q(m)=\Phi(0,m)^{-1}(V)\supset W.$$
   However since ${\cal N}Q(m) \oplus {\cal R}Q(m) =\R^n= {\cal N}Q(m) \oplus W$ we see that
   $W={\cal R}Q(m)$ and (ii) is proved.
   \end{proof}
\begin{remark}
We recall that if we are in the hypotheses of Proposition \ref{propnew} (ii), but $A(k)$ is invertible for any $k \le 0$, then we
do not have any restriction on the choice of $W$, i.e. we can simply take $W$ to be any complement of $\mathcal{N}P(m)$.
However, when the invertibility of $A(k)$ is not ensured,  by \eqref{invPk4}, we need to require $\mathcal{N}(\Phi(0,m)) \subset W$ (clearly such a condition is trivially satisfied if $m=0$ or if $\Phi(0,m)$ is invertible).
\end{remark}

  Suppose \eqref{lineq} has an exponential dichotomy with projections $P(k)$ and $Q(k)$
such that $P(m)=Q(m)$ for some $m$. We recall that if $A(k)$ is invertible for any $k$ it follows that $P(k)=Q(k)$ for any $k$. However we can
 use the previous proofs to show that $P(k)$ may not
equal $Q(k)$ for all other $k$ due to the non invertibility of $\Phi(m,0)$
in the case of $\Z_+$ and that of $\Phi(0,m)$ in the case of $\Z_-$.

\medskip

\begin{remark}\label{rem3bisadd}
 Assume the hypotheses of Proposition \ref{propnew} (i).
Suppose \eqref{lineq} has an exponential dichotomy with respect to a projection
$Q(k)$ such that $Q(m)=P(m)$. Then if $k>m$, ${\cal R}P(k)={\cal R}Q(k)$ by Lemma \ref{lem0} and
\[ {\cal N}P(k)=\Phi(k,m)({\cal N}P(m))=\Phi(k,m)({\cal N}Q(m))={\cal N}Q(k)\]
so that $P(k)=Q(k)$ for $k\ge m$. For $k<m$, ${\cal R}P(k)={\cal R}Q(k)$
by Lemma \ref{lem0}.
However, from the proof of Proposition \ref{propnew} (i), for any $V\neq {\cal N}P(0)$
such that
$$V \oplus {\cal N} (\Phi(m,0))=  \Phi(m,0)^{-1}({\cal N}P(m))$$
(there exists such a $V$ if $\Phi(m,0)$ is not invertible since, by \eqref{invPk3},
${\cal N}P(0) \oplus {\cal N} (\Phi(m,0))=  \Phi(m,0)^{-1}({\cal N}P(m))$)
we get a unique projection $Q(k)$ satisfying $Q(m)=P(m)$
with respect to which \eqref{lineq} has an exponential dichotomy and such that ${\cal N} Q(0)=V$. So ${\cal N}Q(k)$ is not equal to ${\cal N}P(k)$	for some $k<m$ when $\Phi(m,0)$ is  not invertible.
\end{remark}

\begin{remark}\label{rem4bisadd}
  Assume the hypotheses of Proposition \ref{propnew} (ii). Suppose \eqref{lineq} has an exponential dichotomy with respect to a projection
$Q(k)$ such that $Q(m)=P(m)$. Then if $k<m$, ${\cal N}P(k)={\cal N}Q(k)$ by Lemma \ref{lem0} and by \eqref{invPk2}
\[ {\cal R}P(k)=\Phi(m,k)^{-1}({\cal R}P(m))=\Phi(m,k)^{-1}({\cal R}Q(m))={\cal R}Q(k)\]
so that $P(k)=Q(k)$ for $k\le m$. For $k>m$, ${\cal N}P(k)={\cal N}Q(k)$
by Lemma \ref{lem0}. On the other hand, since ${\cal R}P(m)$ is a complement of ${\cal N}P(m)$
such that, by \eqref{invPk4}, ${\cal N}(\Phi(0,m))\subset {\cal R}P(m)$, then for any linear space $V$ such that
$V \oplus {\cal N}P(0)= \R^n$ and $\Phi(0,m)({\cal R}P(m))\subset V$ (if $\Phi(0,m)$ is
not invertible, there are many such $V\neq {\cal R}P(0)$), there is a unique invariant  projection
$Q(k)$ such that ${\cal N}Q(m)={\cal N}P(m)$, ${\cal R}Q(m)={\cal R}P(m)$ and ${\cal R}Q(0)=V$.
So ${\cal R}Q(k)$ is not equal to ${\cal R}P(k)$ for some $k>m$.
However, note by \eqref{invPk4},
   ${\cal R}P(k) \cap {\cal R}Q(k) \supset {\cal N}(\Phi(0,k))$ for any $0 \le k<m$.
\end{remark}

\section{Extension to a larger interval}

Suppose \eqref{lineq} has an exponential dichotomy on $[m,\infty)$   with $m>0$. In this section we give a criterion ensuring that this dichotomy can be extended to $\Z_+$, that is, \eqref{lineq} has an exponential dichotomy on $\Z_+$ with projections of the same rank.
Similarly, if \eqref{lineq} has an exponential dichotomy on $(-\infty,m]$ with $m<0$, we give a criterion ensuring that this dichotomy can be extended to $\Z_-$, that is, \eqref{lineq} has an exponential dichotomy on $\Z_-$ with projections of the same rank. Note that in the invertible
case these extensions always exist but may not in the noninvertible case. These criteria have already been proved in \cite{PC1} but here we show exactly when the dichotomy can be extended without changing the original projection.		

\medskip

To prove our criterion  for dichotomies on $[m,\infty)$, we need two lemmas.

\begin{lemma}\label{lem1ext} Suppose \eqref{lineq} has an exponential dichotomy on $k\ge 1$ with projection $P(k)$. Then this exponential dichotomy can be extended to $k\ge 0$ with $P(k)$ unchanged for $k\ge 1$ if and only if there exists a splitting $U\oplus V=\R^n$
such that $A(0)(U)\subset {\cal R}P(1)$, $A(0)(V)={\cal N}P(1)$ and $A(0)$ is one to one on $V$.
\end{lemma}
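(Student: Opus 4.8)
The plan is to prove both directions by identifying the splitting $U\oplus V=\R^n$ with the range and nullspace of a candidate projection $P(0)$, and then to transfer the dichotomy estimates across the single step from $k=0$ to $k=1$ using the factorization $\Phi(k,0)=\Phi(k,1)A(0)$.

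\textbf{Necessity.} Suppose the dichotomy extends to $k\ge 0$ with $P(k)$ unchanged for $k\ge 1$. I would simply set $U={\cal R}P(0)$ and $V={\cal N}P(0)$, so that $U\oplus V=\R^n$. By the invariance of $P(k)$ and Remark \ref{rem1}-ii), $A(0)$ maps ${\cal R}P(0)$ into ${\cal R}P(1)$, giving $A(0)(U)\subset{\cal R}P(1)$. Moreover, by Definition \ref{EDdef}, $A(0):{\cal N}P(0)\to{\cal N}P(1)$ is invertible, which is exactly the assertion that $A(0)(V)={\cal N}P(1)$ and that $A(0)$ is one to one on $V$. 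So the required splitting exists.

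\textbf{Sufficiency.} Given such a splitting, since $A(0)$ maps $V$ bijectively onto ${\cal N}P(1)$ we have $\dim V=\dim{\cal N}P(1)=n-r$, where $r$ is the rank of $P(1)$, and hence $\dim U=r$. I would define $P(0)$ to be the projection with range $U$ and nullspace $V$, so that $P(0)$ has rank $r$. Then $A(0)({\cal R}P(0))\subset{\cal R}P(1)$ and $A(0)({\cal N}P(0))={\cal N}P(1)$, so by Remark \ref{rem1}-ii) the augmented family is invariant, and $A(0):{\cal N}P(0)\to{\cal N}P(1)$ is invertible by hypothesis. It remains to verify the dichotomy inequalities on $k\ge 0$, and since they already hold for $k\ge m\ge 1$ it suffices to treat $m=0$. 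For $\xi\in{\cal R}P(0)=U$ we have $A(0)\xi\in{\cal R}P(1)$, so \eqref{k1} on $k\ge 1$ gives $|\Phi(k,0)\xi|=|\Phi(k,1)A(0)\xi|\le Ke^{-\alpha(k-1)}|A(0)|\,|\xi|$ for $k\ge 1$, which together with $\Phi(0,0)=I$ yields a stable bound on $k\ge 0$ with a larger constant and the same exponent. For $\xi\in{\cal N}P(0)=V$ we have $A(0)\xi\in{\cal N}P(1)$, so \eqref{k1a} gives $|\Phi(k,0)\xi|=|\Phi(k,1)A(0)\xi|\ge K^{-1}e^{\alpha(k-1)}|A(0)\xi|$, and a lower bound $|A(0)\xi|\ge c|\xi|$ on $V$ then delivers the unstable bound on $k\ge 0$. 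Boundedness of $P(0)$ is automatic, as it is a single fixed projection.

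The main technical point, rather than a genuine obstacle, is the lower bound $|A(0)\xi|\ge c|\xi|$ for $\xi\in V$: in finite dimensions this follows from the injectivity of $A(0)$ restricted to $V$, and it is precisely here that the hypothesis ``$A(0)$ is one to one on $V$'' is needed, reflecting the possible noninvertibility of $A(0)$ on all of $\R^n$. Everything else reduces to bookkeeping of the dichotomy constants across the single extra step, so I expect the proof to be short once the factorization $\Phi(k,0)=\Phi(k,1)A(0)$ and the dimension count $\dim U=r$ are in place.
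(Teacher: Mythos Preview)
Your proof is correct and follows essentially the same approach as the paper: for necessity you take $U={\cal R}P(0)$, $V={\cal N}P(0)$, and for sufficiency you define $P(0)$ as the projection with range $U$ and nullspace $V$, verify invariance, and then push the dichotomy estimates across the single step via $\Phi(k,0)=\Phi(k,1)A(0)$. The only cosmetic difference is that the paper verifies the operator-norm inequalities \eqref{newk1}, \eqref{newk1a} directly (using the norm $|A(0)^{-1}|$ of the inverse on ${\cal N}P(1)$), whereas you verify the equivalent vector-form inequalities \eqref{k1}, \eqref{k1a} using the lower bound $|A(0)\xi|\ge c|\xi|$ on $V$; these are the same in finite dimensions.
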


\begin{proof}   Obviously the condition is necessary since we may take $U={\cal R}P(0)$
		and $V={\cal N}P(0)$. To prove the sufficiency, we define
$P(0)$ as the projection with range $U$ and nullspace $V$. Then
since $A(0)(U)\subset {\cal R}P(1)$,
\[ P(1)A(0)P(0)=A(0)P(0).\]
Next $A(0)(V)= {\cal N}P(1)$ means that $A(0)$ maps the nullspace of $P(0)$ one to one
on to the nullspace of $P(1)$. In particular,
\[ P(1)A(0)(I-P(0))=0.\]
Adding the last two equations, we get the invariance
\[ P(1)A(0)=A(0)P(0).\]
Now we know there exist positive constants $K$ and $\alpha$ such that for $k\ge m\ge 1$
\[ |\Phi(k,m)P(m)|\le Ke^{-\alpha(k-m)},\quad
|\Phi(m,k)(I-P(k))|\le Ke^{-\alpha(k-m)}.\]
Then if $k\ge 1$
\[ \begin{array}{rl}
|\Phi(k,0)P(0)|&=|\Phi(k,1)A(0)P(0)|=|\Phi(k,1)P(1)A(0)|
\le Ke^{-\alpha(k-1)}|A(0)|\\ \\
&=K|A(0)|e^{\alpha}e^{-\alpha k}.\end{array}\]
Also
\[ |\Phi(1,0)P(0)|=|A(0)P(0)|=|A(0)P(0)|e^{\alpha}e^{-\alpha(1-0)}\]
and
\[ |\Phi(0,0)P(0)|=|P(0)|=|P(0)|e^{\alpha(0-0)}\]

So for $k\ge m\ge 0$,
\[  |\Phi(k,m)P(m)|\le K_1e^{-\alpha(k-m)},\]
where $K_1=\max\{K,K|A(0)|e^{\alpha},|A(0)||P(0)|e^{\alpha},|P(0)|\}$.
\medskip

Next if $k\ge 1$,
since $\Phi(k,0)=\Phi(k,1)A(0)$, the map $\Phi(k,0):{\cal N}P(0)\to{\cal N}P(k)$ has
an inverse, which we denote by $\Phi(0,k)$, given by $A(0)^{-1}\Phi(1,k)$,
where $A(0)^{-1}$ is the inverse of $A(0):{\cal N}P(0)\to{\cal N}P(1)$. Then
\[\begin{array}{rl}
|\Phi(0,k)(I-P(k))|
&=|A(0)^{-1}\Phi(1,k)(I-P(k))|\le |A(0)^{-1}|Ke^{-\alpha(k-1)}\\ \\
&=|A(0)^{-1}|Ke^{\alpha}e^{-\alpha k}.\end{array}\]
Also
\[ |\Phi(0,1)(I-P(1))| =|A(0)^{-1}(I-P(1))|
\le |A(0)^{-1}(I-P(1))|e^{\alpha}e^{-\alpha(1-0)}\]\
and
\[ |\Phi(0,0)(I-P(0))| =|(I-P(0))|=|(I-P(0))|e^{-\alpha(0-0)}.\]\
So for $k\ge m\ge 0$,
\[  |\Phi(m,k)(I-P(k))|\le K_2e^{-\alpha(k-m)},\]
where $K_2=\max\{K,|A(0)^{-1}|Ke^{\alpha},|A(0)^{-1}(I-P(1))|e^{\alpha},|I-P(0)|\}$.
\medskip

So we can extend the dichotomy to $k\ge 0$  without changing $P(k)$ for $k\ge 1$.

\end{proof}

\begin{lemma}\label{lemext3} Let $S$ be a subspace of $\R^n$ with dim$(S)=r$ and suppose $A$, $B$ are
	$n\times n$ matrices such that
	\[{\rm dim}\,((AB)^{-1}(S))=r.\]
	Then
	\[{\rm dim}\,(A^{-1}(S))=r.\]
\end{lemma}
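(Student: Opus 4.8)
The plan is to reduce the whole statement to one elementary monotonicity fact: taking the preimage of a subspace under a linear map never decreases its dimension. The linchpin is the composition identity
\[ (AB)^{-1}(S) = B^{-1}\big(A^{-1}(S)\big), \]
valid because $ABx\in S$ exactly when $Bx\in A^{-1}(S)$. Setting $T:=A^{-1}(S)$, the hypothesis reads ${\rm dim}\,(B^{-1}(T))=r$ and the goal becomes ${\rm dim}\,T=r$.

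First I would prove the auxiliary claim that for every $n\times n$ matrix $C$ and every subspace $W\subset\R^n$,
\[ {\rm dim}\,(C^{-1}(W))\ge {\rm dim}\,W. \]
The cleanest route is to compose $C$ with the canonical projection $\pi:\R^n\to\R^n/W$. Since $C^{-1}(W)$ is precisely the nullspace of $\pi C$, and ${\rm rank}(\pi C)\le {\rm dim}(\R^n/W)=n-{\rm dim}\,W$, the rank--nullity theorem gives
\[ {\rm dim}\,(C^{-1}(W)) = n-{\rm rank}(\pi C)\ge n-(n-{\rm dim}\,W)={\rm dim}\,W. \]

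With this claim the lemma follows by a two-sided squeeze. Applying it with $C=A$, $W=S$ yields ${\rm dim}\,(A^{-1}(S))\ge {\rm dim}\,S=r$. Applying it with $C=B$, $W=T=A^{-1}(S)$ and then invoking the composition identity yields
\[ r = {\rm dim}\,((AB)^{-1}(S)) = {\rm dim}\,(B^{-1}(T)) \ge {\rm dim}\,T = {\rm dim}\,(A^{-1}(S)). \]
Together these give ${\rm dim}\,(A^{-1}(S))=r$, as required.

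I do not expect a genuine obstacle here; the difficulty is purely one of finding the right framing. The only step needing a little care is the auxiliary claim, where one must correctly account for the kernel of $C$, and passing to the quotient $\R^n/W$ handles this bookkeeping automatically. Equivalently, one could argue through images: writing ${\rm dim}\,(C^{-1}(W))={\rm dim}\,(W\cap{\cal R}(C))+{\rm dim}\,{\cal N}(C)$, the hypothesis forces $S+{\cal R}(AB)=\R^n$, and since ${\cal R}(AB)\subset{\cal R}(A)$ we obtain $S+{\cal R}(A)=\R^n$, which again pins ${\rm dim}\,(A^{-1}(S))$ at the minimal value $r$. I would present the quotient argument as the main line, as it is the shortest.
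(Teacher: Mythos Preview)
Your proof is correct and your main line is genuinely different from the paper's. The paper never uses the composition identity $(AB)^{-1}(S)=B^{-1}(A^{-1}(S))$; instead it works entirely on the image side, proving the formula
\[
\dim\bigl(C^{-1}(W)\bigr)=\dim\bigl(W\cap{\cal R}(C)\bigr)+\dim{\cal N}(C)
\]
for $C=A$ and $C=AB$, deducing from the hypothesis that $S+{\cal R}(AB)=\R^n$, and then using ${\cal R}(AB)\subset{\cal R}(A)$ to force $S+{\cal R}(A)=\R^n$ and hence $\dim(A^{-1}(S))=r$. This is exactly the alternative you sketch in your final paragraph. Your primary argument---the monotonicity $\dim(C^{-1}(W))\ge\dim W$ proved via the quotient map, combined with the squeeze $r\le\dim(A^{-1}(S))\le\dim(B^{-1}(A^{-1}(S)))=r$---is shorter and more conceptual, since it isolates the single inequality that drives everything and avoids any computation with intersections of ranges. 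The paper's route has the minor advantage of making the geometric content ($S$ is a complement to ${\cal R}(A)$) explicit, which is sometimes useful downstream, but for the lemma as stated your approach is the cleaner one.
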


\begin{proof}  Let $U=A^{-1}(S)$.
		Now $A(U) =S\cap {\cal R}(A)
		\subset S$ so that since ${\cal N}(A)\subset U$
		\[ {\rm dim}\,(S \cap {\cal R}(A))={\rm dim}\,U-{\rm dim}({\cal N}(A))
		\]
		and hence
		\begin{equation}\label{eqq9} {\rm dim}\,(S \cap {\cal R}(A))+  {\rm dim}({\cal N}(A))={\rm dim}(A^{-1}(S)).\end{equation}
	   By exactly the same argument,
	   \begin{equation} \label{eqq8} {\rm dim}\,(S \cap {\cal R}(AB))+  {\rm dim}({\cal N}(AB))={\rm dim}((AB)^{-1}(S))=r.\end{equation}
	   Next let $q= {\rm dim} ({\cal R}(AB))$ and $p= {\rm dim} ({\cal R}(A))$ so that $q \le p \le n$
	and  ${\rm dim}\,({\cal N}(AB))=n-q$, ${\rm dim}\,({\cal N}(A))=n-p$.
	From \eqref{eqq8} we get
	\[
	{\rm dim}\, (S\cap {\cal R}(AB)) = r + q - n.
	\]
	However we know that
	\begin{align*}
	{\rm dim}\, (S\cap {\cal R}(AB))
	&=  {\rm dim}\, (S) + {\rm dim}{\cal R}(AB) -  {\rm dim}\, (S+{\cal R}(AB))\\
	&= r + q - {\rm dim}(S+{\cal R}(AB)).
	\end{align*}
	As a consequence
	\[
	{\rm dim}\, (S+{\cal R}(AB)) = n,
	\]
	that is, $\R^n = S+{\cal R}(AB) \subseteq S+{\cal R}(A) \subseteq\R^n$. Hence $S+{\cal R}(A) = \R^n$ and
	\begin{align*}
	{\rm dim}\, (S\cap {\cal R}(A))
	&=  {\rm dim}\, (S) + {\rm dim}{\cal R}(A) -  {\rm dim}\, (S+{\cal R}(A))\\
	&= r + p - n = r -{\rm dim}({\cal N}(A)).
	\end{align*}
	Then it follows from \eqref{eqq9} that dim\,$A^{-1}(s)=r$ and the proof of the
	lemma is finished.
\end{proof}

Here is our criterion.

\begin{theorem}\label{thm2}
Suppose \eqref{lineq} has an exponential dichotomy on $k\ge m$ with projection
  $P(k)$  of rank $r$, where $m\ge 1$. Then the exponential dichotomy can be extended to $k\ge 0$  with $P(k)$ unchanged for $k\ge m$  if
\begin{equation}\label{eqm} {\rm dim}\,(\Phi(m,0)^{-1}({\cal R}P(m)))=r.\end{equation}
  Conversely, if \eqref{lineq} has an exponential dichotomy on $k\ge 0$ with projection of rank $r$, then \eqref{eqm} holds.
\end{theorem}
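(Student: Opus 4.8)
The plan is to reduce this theorem to the two lemmas already established. The forward direction (condition \eqref{eqm} implies extendability) is the substantive part, and the converse follows by a counting argument using Lemma \ref{lemext3}. Let me treat each direction in turn.

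For the forward direction, assume \eqref{eqm} holds and proceed by induction on $m$, peeling off one step at a time until we reach $k=0$. The key reduction is to the single-step extension criterion of Lemma \ref{lem1ext}. I would first argue that it suffices to handle the step from $k\ge m$ down to $k\ge m-1$, provided the analogue of \eqref{eqm} is preserved one step lower; that is, I need $\mathrm{dim}\,(\Phi(m-1,0)^{-1}({\cal R}P(m-1)))=r$ once I have extended the dichotomy to $k\ge m-1$. To run the single step itself, I must produce a splitting $U\oplus V=\R^n$ with $A(m-1)(U)\subset {\cal R}P(m)$, $A(m-1)(V)={\cal N}P(m)$, and $A(m-1)$ one-to-one on $V$. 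The natural candidate is to take $U=\Phi(m,m-1)^{-1}({\cal R}P(m))=A(m-1)^{-1}({\cal R}P(m))$ and to let $V$ be any complement of $U$; the dimension count \eqref{eqm} at level $m-1$ is exactly what guarantees that $A(m-1)$ restricted to $V$ is injective with image all of ${\cal N}P(m)$. This is where I expect to lean on the structural identities from Remark \ref{rem1}, especially \eqref{invPk4} giving ${\cal N}(\Phi(m,m-1))\subset {\cal R}P(m)$, which forces ${\cal N}(A(m-1))\subset U$ and hence the injectivity of $A(m-1)$ on the complement $V$.

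The main obstacle will be verifying that the dimension hypothesis \eqref{eqm} descends correctly through the induction, that is, that $\mathrm{dim}\,(\Phi(m,0)^{-1}({\cal R}P(m)))=r$ forces $\mathrm{dim}\,(\Phi(m-1,0)^{-1}({\cal R}P(m-1)))=r$. Here I would use the factorization $\Phi(m,0)=A(m-1)\Phi(m-1,0)$ together with the invariance relation ${\cal R}P(m)=\Phi(m,m-1)({\cal R}P(m-1))$ in the form that pulling back ${\cal R}P(m)$ along $A(m-1)$ recovers ${\cal R}P(m-1)$ (this is precisely \eqref{invPk2}). Combining these, $\Phi(m,0)^{-1}({\cal R}P(m))=\Phi(m-1,0)^{-1}(A(m-1)^{-1}({\cal R}P(m)))=\Phi(m-1,0)^{-1}({\cal R}P(m-1))$, so the two preimages coincide and the rank condition is automatically inherited. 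Thus a single application of Lemma \ref{lem1ext} at each step, with the splitting described above, extends the dichotomy one level while keeping $P(k)$ fixed for $k\ge m$, and iterating down to $0$ finishes this direction.

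For the converse, suppose \eqref{lineq} has an exponential dichotomy on $k\ge 0$ with projection of rank $r$. By Proposition \ref{prop1} (uniqueness of the stable subspace), the projection on $k\ge m$ obtained by restriction is the same family $P(k)$, so ${\cal R}P(m)$ is determined. I would then apply Lemma \ref{lemext3} with $S={\cal R}P(m)$, $A=A(m-1)\cdots A(1)=\Phi(m,1)$ and $B=A(0)$, or more directly iterate \eqref{invPk2}: since $\Phi(m,0)^{-1}({\cal R}P(m))={\cal R}P(0)$ by \eqref{invPk2} and $\mathrm{dim}\,{\cal R}P(0)=r$, the equality \eqref{eqm} holds immediately. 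In fact \eqref{invPk2} alone gives the converse without invoking Lemma \ref{lemext3}, since the preimage of ${\cal R}P(m)$ under the transition matrix is exactly ${\cal R}P(0)$, which has dimension $r$. The role of Lemma \ref{lemext3} is rather to show that the dimension hypothesis is stable under the one-step factorization used in the forward induction, confirming that it need only be checked at the single level $m$.
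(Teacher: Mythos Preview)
Your overall strategy matches the paper's: reduce to the one-step Lemma~\ref{lem1ext} via induction, use Lemma~\ref{lemext3} to propagate the dimension condition, and invoke \eqref{invPk2} for the converse. The converse argument is exactly what the paper does.

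There is, however, a genuine error in your single-step construction. You take $U = A(m-1)^{-1}({\cal R}P(m))$ and claim that for \emph{any} complement $V$ of $U$ the map $A(m-1)|_V$ is injective with image equal to ${\cal N}P(m)$. Injectivity is fine, since ${\cal N}(A(m-1)) \subset U$, and a dimension count shows $A(m-1)(V)$ is an $(n-r)$-dimensional subspace meeting ${\cal R}P(m)$ only in $0$. But nothing forces $A(m-1)(V)$ to equal ${\cal N}P(m)$: if, say, $A(m-1)$ is invertible, then $A(m-1)(V)$ is an arbitrary complement of ${\cal R}P(m)$, not the particular one ${\cal N}P(m)$. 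Lemma~\ref{lem1ext} requires $A(0)(V) = {\cal N}P(1)$ exactly, since otherwise the invariance $A(m-1)P(m-1) = P(m)A(m-1)$ fails and you would have to alter ${\cal N}P(m)$, contradicting ``$P(k)$ unchanged for $k \ge m$''.

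The fix, which is how the paper proceeds, is to choose $V$ inside $A(m-1)^{-1}({\cal N}P(m))$: take $V$ to be a complement of ${\cal N}(A(m-1))$ in $A(m-1)^{-1}({\cal N}P(m))$. Then $A(m-1)$ maps $V$ bijectively onto ${\cal N}P(m)$, so $\dim V = n-r$; and if $x \in U \cap V$ then $A(m-1)x \in {\cal R}P(m) \cap {\cal N}P(m) = \{0\}$, whence $x \in {\cal N}(A(m-1)) \cap V = \{0\}$, giving $U \oplus V = \R^n$. With this correction your induction goes through. The paper organizes the induction slightly differently (passing from $[m,\infty)$ to $[1,\infty)$ via the inductive hypothesis and then doing one final step to $0$, rather than peeling off one step at a time), but the content is the same.
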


\begin{proof} We prove the sufficiency by induction on $m\ge 1$. First we prove it for $m=1$. So we are assuming \eqref{lineq} has an exponential dichotomy on $k\ge 1$ with projection of rank $r$ and that dim$(A(0)^{-1}({\cal R}P(1))=r$.
\medskip

Set $U=A(0)^{-1}({\cal R}P(1))$. Then dim$(U)=r$. Note that
\[{\cal N}(A(0))\subset  A(0)^{-1}({\cal N}P(1)).\]
Let $V$ be a subspace such that
\[ V\oplus {\cal N}(A(0))=A(0)^{-1}({\cal N}P(1)).\]
Then $A(0)$ maps $V$ one to one onto ${\cal N}P(1)$ so that, in particular,
dim $(V)=n-r$. Let $x\in U\cap V$. Then $A(0)x\in {\cal R}P(1)\cap{\cal N}P(1)$
and so $A(0)x=0$. Since $A(0)$ is one to one on $V$, it follows that $x=0$.
Hence $U\cap V=\{0\}$ and so $U\oplus V=\R^n$.
Since we also have $A(0)(U)\subset {\cal R}P(1)$, it follows from Lemma \ref{lem1ext} that the exponential dichotomy can be extended to $k\ge 0$   with $P(k)$ unchanged
	for $k\ge 1$. This proves the case $m=1$.
\medskip

Now we assume the sufficiency is true for $m-1\ge 1$ and
prove it for $m$. So suppose \eqref{lineq} has an exponential dichotomy on $k\ge m$ with projection $P(k)$ of rank $r$ and that \eqref{eqm} holds.
Since $\Phi(m,0)=\Phi(m,1)\Phi(1,0)$, it follows from \eqref{eqm}
and Lemma \ref{lemext3} that dim $\Phi(m,1)^{-1}({\cal R}P(m)))=r$. Then
it follows from the induction hypothesis that
\eqref{lineq} has an exponential dichotomy on $k\ge 1$  with $P(k)$ unchanged for $k\ge m$.
\medskip

 Set $U=A(0)^{-1}({\cal R}P(1))$. Then $U=\Phi(m,0)^{-1}({\cal R}P(m))$ since
\[ \Phi(m,0)x\in {\cal R}P(m) \iff A(0)x \in \Phi(m,1)^{-1}({\cal R}P(m))={\cal R}P(1).\]
Hence dim $(U)=r$. Then we prove as for $m=1$
that the exponential dichotomy can be extended to $k\ge 0$ with the
projection unchanged for $k\ge 1$ and hence for $k\ge m$.  So the sufficiency is proved for $m$.
\medskip

Now we prove the necessity. So suppose   \eqref{lineq} has an exponential dichotomy on $k\ge 0$ with projections $Q(k)$ of rank $r$. Then, by Lemma \ref{lem0}, ${\cal R}Q(m)={\cal R}P(m)$ and so by \eqref{invPk2},
\[ {\rm dim}\,(\Phi(m,0)^{-1}({\cal R}P(m)))={\rm dim}\,(\Phi(m,0)^{-1}({\cal R}Q(m)))
={\rm dim}\,({\cal R}Q(0))=r.\]
 The necessity is proved.
\end{proof}

\begin{remark} The special case $m=1$ of this theorem states that an exponential
dichotomy for \eqref{lineq} on $k\ge 1$ with projection $P(k)$ of rank $r$ can be extended to
$k\ge 0$ if and only if dim $(A(0)^{-1}({\cal R}P(1))=r$. This can be used repeatedly
to find out how far a dichotomy on $k\ge m$ can be extended. It can be extended to
$k\ge m-1$ if and only if dim $(A(m-1)^{-1}({\cal R}P(m))=r$. If this holds,
then ${\cal R}P(m-1)=A(m-1)^{-1}({\cal R}P(m))$ and the dichotomy can be further
extended to $k\ge m-2$ if dim $(A(m-2)^{-1}({\cal R}P(m-1))=r$ and so on.
\end{remark}

Now we consider the case of $\Z_-$.
\begin{theorem} \label{thm3} For $m<0$, suppose \eqref{lineq}
has an exponential dichotomy on $k\le m$ with projection $P(k)$   of rank $r$.
Then
	
{\rm (i)} \eqref{lineq} has an exponential dichotomy on $k\le 0$ with projection of
rank $r$ if and only if $\Phi(0,m)$ is one to one on ${\cal N}P(m)$;

{\rm (ii)} \eqref{lineq} has an exponential dichotomy on $k\le 0$ with projection $P(k)$ for $k\le m$ if and only if $\Phi(0,m)$ is one to one on ${\cal N}P(m)$ and ${\cal N}(\Phi(0,m))\subset {\cal R}P(m)$.
\end{theorem}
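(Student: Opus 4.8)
The plan is to prove necessity for both parts together, then establish the sufficiency of (ii) by a direct forward construction, and finally deduce the sufficiency of (i) from (ii).

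For necessity, suppose \eqref{lineq} has an exponential dichotomy on $k\le 0$ with projection $Q(k)$ of rank $r$. Since the interval $(-\infty,0]$ contains $(-\infty,m]$, Lemma \ref{lem0} forces ${\cal N}Q(k)={\cal N}P(k)$ for every $k\le m$ (the unstable subspace is intrinsic to the equation). As $A(k):{\cal N}Q(k)\to{\cal N}Q(k+1)$ is invertible for $k\le -1$, the map $\Phi(0,m):{\cal N}Q(m)\to{\cal N}Q(0)$ is invertible, hence one to one on ${\cal N}Q(m)={\cal N}P(m)$; this is the necessity in (i). If moreover $Q(k)=P(k)$ for $k\le m$, then ${\cal R}Q(m)={\cal R}P(m)$, and since $0\ge m$ lies in the dichotomy interval, \eqref{invPk4} gives ${\cal N}(\Phi(0,m))\subset{\cal R}Q(m)={\cal R}P(m)$; this is the extra necessity in (ii).

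For the sufficiency of (ii) I would argue by a forward construction on the finitely many indices $m<k\le 0$, leaving $P(k)$ untouched for $k\le m$. The nullspaces are forced: since $\Phi(0,m)$ is one to one on ${\cal N}P(m)$, so is each $\Phi(k,m)$ with $m\le k\le 0$, and each $A(k)$ is then one to one on $\Phi(k,m)({\cal N}P(m))$, so I set ${\cal N}P(k):=\Phi(k,m)({\cal N}P(m))$, an $(n-r)$-dimensional space on which $A(k)$ is invertible. The ranges are built recursively while maintaining the invariant ${\cal N}(\Phi(0,k))\subset{\cal R}P(k)$, which holds at $k=m$ by hypothesis. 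The crucial point is that this invariant yields $\Phi(0,k)({\cal R}P(k))\cap{\cal N}P(0)=\{0\}$ (proved from ${\cal N}(\Phi(0,k))\subset{\cal R}P(k)$ and ${\cal R}P(k)\cap{\cal N}P(k)=\{0\}$), and that this, together with the injectivity of $\Phi(0,k+1)$ on ${\cal N}P(k+1)$, forces $[A(k)({\cal R}P(k))+{\cal N}(\Phi(0,k+1))]\cap{\cal N}P(k+1)=\{0\}$. Hence one may take ${\cal R}P(k+1)$ to be any $r$-dimensional complement of ${\cal N}P(k+1)$ containing both $A(k)({\cal R}P(k))$, which gives invariance via Remark \ref{rem1}-ii), and ${\cal N}(\Phi(0,k+1))$, which propagates the invariant. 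The dichotomy bounds concern only finitely many new indices, so they follow from those on $(-\infty,m]$ after enlarging the constants, using injectivity of $A(k)$ on ${\cal N}P(k)$ for the expansion estimate and $|A(k)|$ for the contraction.

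The sufficiency of (i) I would then deduce from (ii). Because $\Phi(0,m)$ is one to one on ${\cal N}P(m)$ we have ${\cal N}(\Phi(0,m))\cap{\cal N}P(m)=\{0\}$, so there is a complement $\tilde W$ of ${\cal N}P(m)$ with ${\cal N}(\Phi(0,m))\subset\tilde W$. By the $\Z_-$ part of Proposition \ref{prop1} (with the half-line $(-\infty,m]$ and its endpoint $m$ in place of $(-\infty,0]$ and $0$) there is a dichotomy on $(-\infty,m]$ whose projection $\tilde P$ satisfies ${\cal N}\tilde P(m)={\cal N}P(m)$ and ${\cal R}\tilde P(m)=\tilde W$. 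This $\tilde P$ meets both hypotheses of (ii), so (ii) extends it to a rank-$r$ dichotomy on $(-\infty,0]$, which is what (i) asserts. The main obstacle is the range construction in (ii): the heart of the argument is showing that the invariant ${\cal N}(\Phi(0,k))\subset{\cal R}P(k)$ is simultaneously what keeps $A(k)({\cal R}P(k))$ disjoint from the forced nullspace ${\cal N}P(k+1)$ and what the recursion preserves. This role of \eqref{invPk4} for $\Z_-$ is the exact counterpart of the range identity used for $\Z_+$ in Theorem \ref{thm2}, and it is precisely the failure of ${\cal N}(\Phi(0,m))\subset{\cal R}P(m)$ that obstructs keeping the projection unchanged, explaining why this extra condition appears only in (ii).
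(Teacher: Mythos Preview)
Your argument is correct, but your route differs from the paper's. The paper proves (i) first, by induction on $-m$: the base case $m=-1$ is handled by a one-step extension lemma (Lemma~\ref{lemext4}), where one first uses the freedom in Proposition~\ref{prop1} to force ${\cal N}A(-1)\subset{\cal R}P(-1)$ and then builds $P(0)$ by hand; the inductive step peels off one index at a time. With (i) in hand, the paper deduces (ii) by taking the rank-$r$ dichotomy $Q$ on $(-\infty,0]$ from (i) and invoking Proposition~\ref{propnew}\,(ii) at the point $m$ to replace the range of $Q(m)$ by ${\cal R}P(m)$, after which uniqueness (Lemma~\ref{lem0} and \eqref{invPk2}) forces $Q(k)=P(k)$ for all $k\le m$.

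You reverse the dependence: you establish (ii) directly by a finite forward construction on $m<k\le 0$, maintaining the invariant ${\cal N}(\Phi(0,k))\subset{\cal R}P(k)$, and then derive (i) from (ii) by first reshaping ${\cal R}P(m)$ via Proposition~\ref{prop1} (translated to the endpoint $m$) so that it contains ${\cal N}(\Phi(0,m))$. Both approaches are valid. Your construction is more self-contained in that it avoids the auxiliary Lemma~\ref{lemext4} and the induction, and it makes transparent exactly why the condition ${\cal N}(\Phi(0,m))\subset{\cal R}P(m)$ is the right one to propagate; the paper's approach is more modular, reusing Proposition~\ref{propnew}\,(ii) as a black box and keeping the extension step (Lemma~\ref{lemext4}) isolated with explicit constants. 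One small point worth making explicit in your write-up is the dimension check: that $\dim\bigl[A(k)({\cal R}P(k))+{\cal N}(\Phi(0,k+1))\bigr]\le r$, which you implicitly use when enlarging to an $r$-dimensional complement, follows immediately from the transversality to ${\cal N}P(k+1)$ you established.
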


\begin{proof} The necessity of the condition  in (i)  is obvious from the definition of exponential dichotomy.   The necessity of the condition in (ii) follows from the definition of exponential dichotomy and from \eqref{invPk4}.
\medskip	

Next we prove the sufficiency for $m=-1$. For this we need a lemma.

\begin{lemma}\label{lemext4}
Suppose \eqref{lineq} has an exponential dichotomy on   $k\le -1$  with invariant family of projections $P(k)$.
 Then the exponential dichotomy can be extended to $\Z_-$ without changing
	the projections for $k\le -1$ if and only if
	there exists a projection $P(0)$ such that $A(-1)P(-1)=P(0)A(-1)$ and $A(-1)$ maps
	${\cal N}P(-1)$ bijectively on to ${\cal N}P(0)$.
\end{lemma}

\begin{proof}  The necessity of the conditions follows from the definition of exponential dichotomy. To prove the sufficiency, we observe first that we know already that $A(k)P(k)=P(k+1)A(k)$ for $k\le -2$. By hypothesis, this also holds for $k=-1$.
 Hence the family $\{P(k)\}_{k\le 0}$ is invariant.
\medskip

  Next we know that $\Phi(k,m)$ maps ${\cal N}P(m)$ bijectively onto ${\cal N}P(k)$ for
	$m<k\le -1$. For $m<0$, we have $\Phi(0,m)=\Phi(0,-1)\Phi(-1,m)=A(-1)\Phi(-1,m)$ and hence
$\Phi(0,m):{\cal N}P(m)\to {\cal N}P(0)$
is invertible because both $\Phi(-1,m):{\cal N}P(m)\to {\cal N}P(-1)$ and $A(-1):{\cal N}P(-1)\to {\cal N}P(0)$ are invertible.
Taking the inverses in the appropriate spaces, from $\Phi(0,m)=A(-1)\Phi(-1,m)$, we get
\begin{equation}\label{phia}
\Phi(m,0)=\Phi(m,-1)A(-1)^{-1}.
\end{equation}

Next we prove the dichotomy inequalities.

i) For $m\le k\le -1$ we know we have
\[
|\Phi(k,m)P(m)|\le Ke^{-\alpha(k-m)}.
\]
Hence we only have to consider the two cases: $m<k=0$ and $m=k=0$. We have
\[\begin{array}{l}
|\Phi(0,0)P(0)|=|P(0)|=|P(0)|e^{-\alpha(0-0)}\\
|\Phi(0,m)P(m)|=|\Phi(0,-1)\Phi(-1,m)P(m)|= |A(-1)\Phi(-1,m)P(m)|\\
\qquad \le |A(-1)|Ke^\alpha e^{-\alpha(0-m)},
\end{array}\]
for any $m<0$.
\medskip

ii) For $m\le k\le -1$ we know we have
\[
|\Phi(m,k)(I-P(k))|\le Ke^{-\alpha(k-m)},
\]
 where $\Phi(m,k)$ is the inverse of $\Phi(k,m):{\cal N}P(m)\to {\cal N}P(k)$.
Hence we only have to consider the two cases: $m<k=0$ and $m=k=0$.
First note
\[ |\Phi(0,0)(I-P(0))|=|I-P(0)|= |I-P(0)|e^{-\alpha(0-0)}.\]
Then,   using \eqref{phia} , for any $x\in{\cal N}P(0)$ and for any $m<0$:
\begin{align*}
|\Phi(m,0)x|
&=|\Phi(m,-1)A(-1)^{-1}x|\\
& = |\Phi(m,-1)(I-P(-1))A(-1)^{-1}x|\quad {\rm since\;} A(-1)^{-1}:{\cal N}P(0)\to {\cal N}P(-1)\\
&\le Ke^{\alpha(m+1)}|A(-1)^{-1}|\, |x|,
\end{align*}
that is,
\[
|\Phi(m,0)(I-P(0))| \le Ke^{\alpha}|A(-1)^{-1}| \, |I-P(0)|e^{-\alpha(0-m)}.
\]
This completes the proof of the lemma.
\end{proof}

  Now we continue the proof of Theorem \ref{thm3}. To prove the sufficiency   of the condition in (i)  for $m=-1$, suppose
$A(-1)$ is one to one on ${\cal N}P(-1)$ and  set $r= \textrm{dim}({\cal N}P(-1))$, $k=\textrm{dim}({\cal N}A(-1))$.
Note that, from \eqref{invPk4} it follows that ${\cal N}A(-1)={\cal N}\Phi(0,-1)$ intersects ${\cal N}P(-1)$ in $\{0\}$. Since, by Proposition \ref{prop1}, ${\cal R}P(-1)$ can be an arbitrary complement of
${\cal N}P(-1)$, we may choose it so that
\[ {\cal N}A(-1) \subset {\cal R}P(-1).\]
Let us set
$$U=  A(-1)({\cal N}P(-1)) \qquad \textrm{and} \qquad V=  A(-1){\cal R}P(-1). $$ Then
$\textrm{dim}(U)=r$ and  $\textrm{dim}(V)=n-r-k$. Also
$U \cap V= \{0\}$ . Indeed if $x=A(-1)v=A(-1)u$, with $u\in{\cal N}P(-1)$ and $v\in {\cal R}P(-1)$, then $u-v\in{\cal N}A(-1)\subset {\cal R}P(-1)$. So $u=v+(u-v)\in{\cal R}P(-1)$ from which we get $u=0$ and also $x=0$.
Hence we can find $W$ such that $\textrm{dim}(W)=k$ and
$$U \oplus V \oplus W = \R^n .$$
Let $P(0)$ be the projection such that ${\cal N}P(0)=U$ and ${\cal R}P(0)=V \oplus W$.
Since $V=  A(-1){\cal R}P(-1)$, then
\[ P(0)A(-1)P(-1)=A(-1)P(-1).\]
Next $A(-1)$  maps the nullspace of $P(-1)$ one to one
on  the nullspace of $P(0)$. In particular,
\[ P(0)A(-1)(I-P(-1))=0.\]
Adding the last two equations, we get the invariance
\[ P(0)A(-1)=A(-1)P(-1).\]
So, by Lemma \ref{lemext4} we can extend the dichotomy to $k\le 0$   without changing the projections
	for $k\le -1$ (after the initial change of $P(-1)$).
  So the sufficiency in (i) is proved for $m=-1$.  The argument for (ii) is similar.
  	The difference is that we know that ${\cal N}A(-1) \subset {\cal R}P(-1)$ so that $P(-1)$
  	does not need to be changed.  	
\medskip

To complete the proof of the  sufficiency of the condition in (i)  by induction on $-m\ge 1$, we suppose  the sufficiency holds  for $-m-1\ge 1$ and prove it for $-m$. So suppose \eqref{lineq} has an exponential dichotomy on $k\le m$ and that $\Phi(0,m)$ is one to one on ${\cal N}P(m)$. Since $\Phi(0,m)=\Phi(0,m+1)A(m)$, it follows that $A(m)$ is one to one on ${\cal N}P(m)$. So by the $m=-1$ case, it follows that the exponential dichotomy can be extended to $k\le m+1$ with ${\cal N}P(m+1)=A(m)({\cal N}P(m))$. However then $\Phi(0,m+1)$ is one to one on ${\cal N}P(m+1)$. By the induction hypothesis it follows that the exponential dichotomy can be extended to $k\le 0$.  This completes the proof of the sufficiency of the condition in (i).
\medskip

To prove the sufficiency of the condition in (ii), suppose \eqref{lineq} has an exponential dichotomy on $k\le m$, that $\Phi(0,m)$ is one to one on ${\cal N}P(m)$ and that ${\cal N}(\Phi(0,m))\subset {\cal R}P(m)$. Then by (i), we know \eqref{lineq} has an exponential dichotomy on $k\le m$ with projection $Q(k)$ having the same rank as $P(k)$. By Proposition \ref{prop1}, we know that ${\cal N}Q(m)={\cal N}P(m)$. Thus ${\cal R}P(m)$ is a complement to ${\cal N}Q(m)$. So by Proposition \ref{propnew}, \eqref{lineq} has an exponential dichotomy on $k\le 0$ with projection equal to $P(m)$ when $k=m$. When $k<m$, the nullspace of the projection is uniquely determined as ${\cal N}P(k)$ and, by \eqref{invPk2}, the range is $\Phi(k,m)^{-1}({\cal R}P(m))$. So $P(k)$ is unchanged for $k<m$ also.
\end{proof}

\begin{remark}\label{rem4}
To extend a dichotomy from $k\le m$, where $m<0$, to $k=0$, we could do
it step by step using the case $m=-1$ of Theorem \ref{thm3}.
\end{remark}

\section{Roughness of exponential dichotomy}

To prove the roughness theorem we need a lemma. This lemma is just Lemma 3
from Zhou et al \cite{ZLZ} with a slight change in notation (actually the lemma is originally from Barreira et al \cite{BSV}).
In the whole section we always assume that $ \displaystyle \sum_{m=a}^{b}\mu_m=0$ if $a>b$.

\begin{lemma}\label{dislem2}
	
	{\rm (i)} Let $a$ be a fixed integer. Suppose
	$\{\mu_k\}_{k=a}^{\infty}$ is a bounded sequence of nonnegative numbers for which there exist positive numbers $D$, $\alpha$ and $\delta$ such that for $k\geq a$
	\[\mu_k\leq De^{-\alpha(k-a)}+\delta\sum_{m=a}^{k-1}e^{-\alpha(k-m-1)}\mu_m
	+\delta\sum_{m=k}^{\infty}e^{-\alpha(m+1-k)}\mu_m.
	\]
	Then if
	\[ \sigma=\delta(1+e^{-\alpha})(1-e^{-\alpha})^{-1}<1,\]
	we have
	\[
	\mu_k\leq \frac{D}{1-\delta e^{-\alpha}/(1-e^{-(\alpha+\beta)})}e^{-\beta(k-a)},\quad k\ge a, \]
	where
	\[\beta=-\log({\rm cosh}\,\alpha-\sqrt{{\rm sinh}^2\alpha-2\delta{\rm sinh}\,\alpha}).\]
	\medskip
	
	{\rm (ii)} Let $b$ be a fixed integer. Suppose
	$\{\mu_k\}_{k=-\infty}^{b}$ is a bounded sequence of nonnegative numbers for which there exist positive numbers $D$, $\alpha$ and $\delta$ such that for $k\leq b$
	\[\mu_k\leq De^{-\alpha(b-k)}+\delta\sum_{m=-\infty}^{k-1}e^{-\alpha(k-m-1)}\mu_m
	+\delta\sum_{m=k}^{b-1}e^{-\alpha(m+1-k)}\mu_m.
	\]
	Then if
	\[ \sigma=\delta(1+e^{-\alpha})(1-e^{-\alpha})^{-1}<1,\]
	we have
	\[
	\mu_k\leq \frac{D}{1-\delta e^{-\gamma}/(1-e^{-(\alpha+\gamma)})}e^{-\gamma(b-k)},\quad k\le b, \]
	where
	\[\gamma=\beta+\log(1+2\delta{\rm sinh}\,\alpha).\]
\end{lemma}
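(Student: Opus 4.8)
The plan is to recast both inequalities as comparisons with the fixed point of a positive, contractive affine operator, and then to produce the asserted bound as that fixed point. For part (i), I would work on the Banach space of bounded sequences $\{\nu_k\}_{k\ge a}$ with the supremum norm and introduce the affine operator
\[
(T\nu)_k = De^{-\alpha(k-a)} + \delta\sum_{m=a}^{k-1}e^{-\alpha(k-m-1)}\nu_m + \delta\sum_{m=k}^{\infty}e^{-\alpha(m+1-k)}\nu_m .
\]
Its linear part $L$ has a nonnegative kernel, and summing the two geometric series shows that the total weight in each row is at most $\delta(1+e^{-\alpha})(1-e^{-\alpha})^{-1}=\sigma$; hence $\|L\|\le\sigma<1$, so $T$ is a contraction with a unique bounded fixed point, and the hypothesis on $\{\mu_k\}$ is exactly the pointwise inequality $\mu\le T\mu$.

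Next I would locate that fixed point explicitly through the ansatz $g_k=Ce^{-\beta(k-a)}$. Writing $p=k-a$ and summing in closed form, the backward (finite) sum equals $C\delta e^{\alpha}(e^{-\beta p}-e^{-\alpha p})/(e^{\alpha-\beta}-1)$ and the forward (infinite) sum equals $C\delta e^{-\alpha}e^{-\beta p}/(1-e^{-(\alpha+\beta)})$, so $Tg_k$ is a linear combination of $e^{-\beta p}$ and $e^{-\alpha p}$. Forcing the coefficient of $e^{-\beta p}$ to equal $1$ yields, after clearing denominators with $x=e^{-\beta}$ and $q=e^{-\alpha}$, the quadratic $x^2-2\cosh\alpha\,x+(1+2\delta\sinh\alpha)=0$, whose smaller root $x=\cosh\alpha-\sqrt{\sinh^2\alpha-2\delta\sinh\alpha}$ is precisely $e^{-\beta}$ (the radicand is nonnegative because $\sigma<1$ forces $2\delta\le\sinh\alpha$). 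With this $\beta$, the choice $C=D/(1-\delta e^{-\alpha}/(1-e^{-(\alpha+\beta)}))$ makes the leftover $e^{-\alpha p}$ coefficient vanish, so $Tg=g$ and $g$ is the unique bounded fixed point.

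I would then compare $\mu$ with $g$. Setting $v=\mu-g$, the relations $\mu\le T\mu$ and $g=Tg$ give $v\le Lv$ pointwise, and a maximum-principle argument closes the estimate: if $V=\sup_k v_k$ were positive, nonnegativity of the kernel together with the row bound would give $v_k\le(Lv)_k\le\sigma V$ for every $k$, hence $V\le\sigma V$, contradicting $\sigma<1$; thus $V\le 0$, i.e. $\mu_k\le g_k$, which is statement (i).

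Part (ii) is the mirror image: now the backward sum runs to $-\infty$ and the forward sum is truncated at $b-1$, so the truncation correction moves to the forward series. Repeating the ansatz with $g_k=Ce^{-\gamma(b-k)}$ produces the reciprocal quadratic $[q+\delta(1-q^2)]y^2-(1+q^2)y+q=0$ in $y=e^{-\gamma}$, whose relevant root is the reciprocal of the larger root $x_+=\cosh\alpha+\sqrt{\sinh^2\alpha-2\delta\sinh\alpha}$ of the previous quadratic; since $x_-x_+=1+2\delta\sinh\alpha$ by Vieta's formula, one obtains $e^{-\gamma}=1/x_+=e^{-\beta}/(1+2\delta\sinh\alpha)$, that is $\gamma=\beta+\log(1+2\delta\sinh\alpha)$, and the comparison step is identical. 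I expect the only genuine work to be the closed-form evaluation of the two geometric sums together with the bookkeeping showing that the $e^{-\alpha p}$ contributions cancel exactly for the stated constant $C$; getting the truncated side right is the crux, since it is precisely that side which differs between (i) and (ii) and is responsible for the extra factor $1+2\delta\sinh\alpha$ in the decay rate, while the contraction and maximum-principle steps are routine.
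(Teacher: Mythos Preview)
The paper does not actually prove this lemma: it states it as ``just Lemma~3 from Zhou et~al~\cite{ZLZ} with a slight change in notation (actually the lemma is originally from Barreira et~al~\cite{BSV})'' and moves on. So there is no in-paper argument to compare against; you have supplied a proof where the authors chose to cite one.

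Your argument is correct. The kernel of the linear part $L$ is nonnegative with row sums bounded by $\sigma<1$, so the maximum-principle step ($v\le Lv$ and $\sup v_k>0$ force $\sup v_k\le\sigma\sup v_k$) is valid. The closed-form evaluation of the two geometric sums on the ansatz $g_k=Ce^{-\beta(k-a)}$ does lead, after clearing denominators with $x=e^{-\beta}$ and $q=e^{-\alpha}$, to $qx^2-(1+q^2)x+q+\delta(1-q^2)=0$, i.e.\ $x^2-2\cosh\alpha\,x+(1+2\delta\sinh\alpha)=0$, and the substitution $y=1/x$ indeed converts this into your quadratic for part~(ii), so Vieta's relation $x_-x_+=1+2\delta\sinh\alpha$ gives $\gamma=\beta+\log(1+2\delta\sinh\alpha)$ as stated. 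The identification of $C$ also checks: from $\delta/(x-q)+\delta q/(1-qx)=1$ one reads off $1-\delta q/(1-qx)=\delta/(x-q)$, which is exactly the factor making the residual $e^{-\alpha p}$ coefficient vanish. When you write this up, include the verification that the smaller root $x_-$ lies in $(e^{-\alpha},1)$ (equivalently $0<\beta<\alpha$), which is needed for the finite geometric sum and for $C>0$; this is equivalent to $\delta<\tanh(\alpha/2)$, i.e.\ precisely $\sigma<1$, and similarly $x_+\in(1,e^{\alpha})$ gives $0<\gamma<\alpha$.
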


In our roughness theorem, we do not consider additive perturbations of the form $A(k)+B(k)$ because there are already proofs for this case in the literature. As far as we know, this is the first proof for multiplicative perturbations of the form $A(k)[I+B(k)]$.
For the case of $\Z$, the proof below follows the proof in Zhou et al. \cite{ZLZ} with certain modifications. They did not consider the half-axis cases. We show how these cases can be deduced from the $\Z$ case. The reason we follow the proof by Zhou et al. \cite{ZLZ} is that they obtain the best estimates. In particular, they show that the constants in the dichotomy for the perturbed system approach those for the unperturbed system as the perturbation approaches zero. This was also done by Henry \cite{H} but we prefer the proof in Zhou et al. \cite{ZLZ} as it can be used in the continuous time case also.  We also show the continuity of the projection which Zhou et al. \cite{ZLZ} did not show. Note in this theorem we use Definition \ref{EDdef} as our definition of dichotomy.
\medskip

\begin{theorem} \label{roughthm} Suppose \eqref{lineq} has an exponential dichotomy on an interval $J=[a,\infty)$ or $(-\infty,b]$ or $(-\infty,\infty)$ with projection $P(k)$ and constants $K$, $\alpha$ satisfying \eqref{newk1} and \eqref{newk1a}. Then,  if $|B(k)|\le \delta$, where	
\begin{equation}\label{sigeq}
	\rho\delta=K(1-e^{-\alpha})^{-1}(1+e^{-\alpha})\delta<1,
\end{equation}
the perturbed system
\begin{equation}\label{perteq}
	x(k+1)=A(k)[I+B(k)]x(k)
\end{equation}
also has an exponential dichotomy with projection $Q(k)$ of the same rank with constant $L$ and exponent $\beta$, where
\begin{align*}
	\beta&=-\log\left({\rm cosh}\,\alpha-\sqrt{{\rm sinh}^2\alpha-2K\delta{\rm sinh}\,\alpha}\right),\\
	L&=K(1+K\delta(1-\rho\delta)^{-1}(1-e^{-\alpha})^{-1})\max\{D_1,D_2\},\\	
	D_1&=(1-K(1-e^{-(\alpha+\beta)})^{-1}\delta)^{-1},\\
	D_2&=(1-Ke^{\alpha-\gamma}(1-e^{-(\alpha+\gamma)})^{-1}\delta)^{-1},\\
	\gamma&=\beta+\log(1+2Ke^{\alpha}\delta{\rm sinh}\,\alpha),
\end{align*}
such that
\[
	|Q(k)-P(k)|\le KL(1-e^{-(\alpha+\beta)})^{-1}(1+e^{-(\alpha+\beta)})\delta.
\]
\end{theorem}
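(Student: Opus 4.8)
The plan is to prove the bi-infinite case $J=\Z$ first, following Zhou et al.\ \cite{ZLZ}, and then reduce the two half-line cases to it. For the reduction, given a dichotomy on $[a,\infty)$ (resp.\ $(-\infty,b]$), I would extend the coefficients $A(k)$ beyond the interval by a constant invertible matrix that leaves $\mathcal{R}P(a)$ and $\mathcal{N}P(a)$ invariant and contracts (resp.\ expands) at rate $\alpha$, so that the extended system has an exponential dichotomy on $\Z$ with the same constants $K,\alpha$; extending $B(k)$ by $0$ keeps $|B(k)|\le\delta$, and the dichotomy produced for the extended perturbed system restricts on the original interval to the desired one, since $\tilde B$ and $B$ agree there.

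On $\Z$ the engine is the discrete variation-of-constants formula combined with the identity $\Phi(k,j+1)A(j)=\Phi(k,j)$ and the invariance \eqref{k1a4}. Writing \eqref{perteq} as $x(k+1)=A(k)x(k)+A(k)B(k)x(k)$ and applying the formula, the unbounded factor $A(j)$ is absorbed into the transition matrix, so that a bounded solution with prescribed stable part at $k_0$ solves the Green's-function equation
\[
(Tx)(k)=\Phi(k,k_0)\xi+\sum_{j=k_0}^{k-1}\Phi(k,j)P(j)B(j)x(j)-\sum_{j=k}^{\infty}\Phi(k,j)(I-P(j))B(j)x(j),
\]
where $\xi\in\mathcal{R}P(k_0)$. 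This is the crucial point that makes the multiplicative case work: only the dichotomy bounds \eqref{newk1} and \eqref{newk1a} on $\Phi(k,j)P(j)$ and $\Phi(k,j)(I-P(j))$ enter, never $|A(j)|$. Using $\sum_{j}e^{-\alpha|k-j|}\le(1+e^{-\alpha})(1-e^{-\alpha})^{-1}$, the map $T$ is a contraction on bounded sequences with factor $\rho\delta<1$ by \eqref{sigeq}, so it has a unique fixed point $x(\cdot\,;\xi)$, linear in $\xi$; these are the forward-decaying solutions, and their values give $\mathcal{R}Q(k_0)$.

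Symmetrically, solving the corresponding backward problem on $(-\infty,k_0]$ for $\eta\in\mathcal{N}P(k_0)$ produces the backward-decaying solutions whose values give $\mathcal{N}Q(k_0)$. To obtain the exponential rates I would feed $\mu_k=|x(k;\xi)|$ into Lemma \ref{dislem2}(i), and the backward analogue into Lemma \ref{dislem2}(ii); the one-step index shift created when $A(j)$ is absorbed is exactly what matches the raw Green's-function bounds to the hypotheses of that lemma, and it accounts for the asymmetric appearance of $K\delta$ in $\beta$ and of $Ke^{\alpha}\delta$ in $\gamma$. Tracking the constants through the lemma yields the stated $\beta,\gamma,D_1,D_2$ and $L$. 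I would then define $Q(k)$ as the projection with range $\mathcal{R}Q(k)$ and nullspace $\mathcal{N}Q(k)$, checking that these are complementary of dimensions $r$ and $n-r$, that the family is invariant under $A(k)[I+B(k)]$, and that this coefficient maps $\mathcal{N}Q(k)$ bijectively onto $\mathcal{N}Q(k+1)$ (here $I+B(k)$ is invertible since $\rho>1$ forces $\delta<1$). The inequalities of Definition \ref{EDdef} for $Q(k)$ then follow from the decay estimates.

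Finally, for the closeness estimate I would compare $Q(k)$ and $P(k)$ through the fixed-point representation: the perturbed stable and unstable subspaces appear as graphs over $\mathcal{R}P(k)$ and $\mathcal{N}P(k)$ of linear maps whose norms are controlled by the contraction, and summing the resulting geometric series produces the bound $|Q(k)-P(k)|\le KL(1-e^{-(\alpha+\beta)})^{-1}(1+e^{-(\alpha+\beta)})\delta$. The main obstacle is not any single step but the bookkeeping of constants in the previous paragraph: one must align the Green's-function bounds with the precise index shifts in Lemma \ref{dislem2} so that the exponents come out sharply as $\beta$ and $\gamma$ rather than something weaker, and one must handle the genuinely noninvertible features — constancy of the rank and the bijectivity of $A(k)[I+B(k)]$ on the nullspaces — which have no counterpart in the classical invertible roughness theorem. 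Verifying that the half-line extension preserves $K$ and $\alpha$ is a secondary technical point.
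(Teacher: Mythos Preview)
Your plan matches the paper's proof closely: reduce the half-lines to $\Z$ by the extension trick (this is the paper's Lemma~\ref{extending}), use the absorption $\Phi(k,j+1)A(j)=\Phi(k,j)$ so that only dichotomy bounds enter, run a contraction to build forward- and backward-bounded solutions (the paper's Claims 3 and 4), define $Q(k)$ via $V^{\pm}(k)$, and invoke Lemma~\ref{dislem2}. The remark that $I+B(k)$ is invertible is unnecessary: bijectivity of $\Psi(k,m)$ on $\mathcal{N}Q(m)$ follows in the paper purely from $V^+(m)\cap V^-(m)=\{0\}$ and a dimension count, not from invertibility of the coefficient.

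There is, however, one substantive step you have skipped that the paper treats carefully. Feeding $\mu_k=|x(k;\xi)|$ into Lemma~\ref{dislem2}(i) bounds the forward solution in terms of $|\xi|=|P(k_0)x(k_0;\xi)|$, not in terms of the actual initial value, so it does not directly give $|\Psi(k,m)Q(m)|\le Le^{-\beta(k-m)}$. The paper handles this conversion in two stages. First, it obtains the \emph{rough} non-decaying bounds $|\Psi(k,m)Q(m)|\le K(1-\rho\delta)^{-1}$ and $|\Psi(k,m)(I-Q(m))|\le K(1-\rho\delta)^{-1}$ by solving the inhomogeneous equation (Claim~2) with a point source $f$ concentrated at $m-1$; this is a genuinely separate idea, not a consequence of Claims~3--4. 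Second, it substitutes the backward representation into the forward one to replace $P(m)Q(m)$ by $P(m)$, producing an integral equation for $\Psi(k,m)Q(m)$ that has an extra tail sum over $p<m$ involving $(I-Q(m))$; that tail is controlled by the rough bound, and only then does Lemma~\ref{dislem2} apply. The factor $(1+K\delta(1-\rho\delta)^{-1}(1-e^{-\alpha})^{-1})$ in $L$ comes precisely from this step. Your sketch says ``tracking the constants through the lemma yields the stated $L$'', but without the rough bounds and the substitution you will not arrive at the stated constant; you should add that ingredient.
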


\begin{proof} First we prove the theorem for $J=\Z$. We denote by $\Phi(k,m)$, $k\ge m$, the transition matrix for \eqref{lineq}. Note in this proof, whenever $k<m$, $\Phi(k,m)$ means the inverse of $\Phi(m,k):{\cal N}P(k)\to {\cal N}P(m)$. The proof will follow from four claims.

\medskip

{\bf Claim 1:} {\it If $u(k)$ is a solution of
	\begin{equation}\label{inhomog}
	x(k+1)=A(k)(I+B(k))x(k)+f(k),
	\end{equation}	
	on an interval $[a,b]$, then
	\begin{equation} \label{ab3}\begin{array}{rl}
	&u(k)\\ \\
	& =\displaystyle\Phi(k,a)P(a)u(a) +\Phi(k,b)(I-P(b))u(b)\\ \\
	&\displaystyle+\sum_{m=a}^{k-1}\Phi(k,m)P(m)B(m)u(m)
	+\sum_{m=a}^{k-1}\Phi(k,m+1)P(m+1)f(m)\\ \\
	&\displaystyle-\sum_{m=k}^{b-1}\Phi(k,m)(I-P(m))B(m)u(m)
	-\sum_{m=k}^{b-1}\Phi(k,m+1)(I-P(m+1))f(m)
	\end{array}\end{equation}
	gives a representation of the solution $u(k)$ in terms of its boundary values $P(a)u(a)$ and $(I-P(b))u(b)$.}

\begin{proof} If $u(k)$ is a solution of \eqref{perteq}, then it is easy to show that for $k\ge a$,
	\[ u(k)=\Phi(k,a)u(a) +\sum^{k-1}_{m=a}\Phi(k,m)B(m)u(m)
	+\sum^{k-1}_{m=a}\Phi(k,m+1)f(m), \]
	where $\Phi(k,m)$ is the transition matrix for \eqref{lineq}, from which it follows that
	\begin{equation} \label{ab1}
	\begin{array}{rl}
	P(k)u(k)
	&=\displaystyle\Phi(k,a)P(a)u(a) +\sum_{m=a}^{k-1}\Phi(k,m)P(m)B(m)u(m)\\ \\
	&\qquad\qquad +\displaystyle\sum_{m=a}^{k-1}\Phi(k,m+1)P(m+1)f(m).
	\end{array} \end{equation}
	Next we show on $k\le b$ that
	\begin{equation} \label{ab2}\begin{array}{rl}(I-P(k))u(k)
	&=\displaystyle\Phi(k,b)(I-P(b))u(b)-\sum_{m=k}^{b-1}\Phi(k,m)(I-P(m))B(m)u(m)\\ \\
	&\qquad\qquad -\displaystyle\sum_{m=k}^{b-1}\Phi(k,m+1)(I-P(m+1))f(m).
	\end{array}\end{equation}
	First  we have for $k\le b$
	\[ u(b)=\Phi(b,k)u(k) +\sum^{b-1}_{m=k}\Phi(b,m)B(m)u(m)
	+\sum^{b-1}_{m=k}\Phi(b,m+1)f(m). \]
	Multiply by $I-P(b)$ to get, using the invariance,
	\begin{align*}
	(I-P(b))u(b)
	&=\Phi(b,k)(I-P(k))u(k) +\sum^{b-1}_{m=k}\Phi(b,m)(I-P(m))B(m)u(m)\\
	&\qquad\qquad +\sum^{b-1}_{m=k}\Phi(b,m+1)(I-P(m+1))f(m). \end{align*}
	Next multiply by $\Phi(k,b)$ the inverse of $\Phi(b,k):{\cal N}P(k)\to{\cal N}P(b)$
	to get
	\begin{align*}\Phi(k,b)(I-P(b))u(b)
	&=(I-P(k))u(k) +\sum^{b-1}_{m=k}\Phi(k,m)(I-P(m))B(m)u(m)\\
	&\qquad\qquad +\sum^{b-1}_{m=k}\Phi(k,m+1)(I-P(m+1))f(m) \end{align*}
	and hence \eqref{ab2}. Here we have used the fact that $\Phi(k,b)\Phi(b,m)=\Phi(k,m)$.
	This follows from $\Phi(b,k)=\Phi(b,m)\Phi(m,k)$ if $k\le m\le b$, where
	we restrict the mappings to the nullspaces of the respective projections. We take
	inverses to get $\Phi(k,b)=\Phi(k,m)\Phi(m,b)$ and then multiply both sides
	from the right by $\Phi(b,m)$.
	\medskip
	
	Adding \eqref{ab1} and \eqref{ab2}, we obtain \eqref{ab3}.
\end{proof}

{\bf Claim 2: } {\it Let $f(k)$ be a sequence bounded on $\Z$. Then the equation
	\begin{equation}\label{eqf} x(k+1)=A(k)[I+B(k)]x(k)+f(k)\end{equation}
	has a unique bounded solution $u(k)$ which satisfies}
\begin{equation}\label{eqbf}\begin{array}{rl} &u(k)\\ \\
&=\displaystyle\sum_{m=-\infty}^{k-1}\Phi(k, m)P(m)B(m)u(m)-\sum_{m=k}^{\infty}\Phi(k,m)(I-P(m))B(m)u(m)\\ \\
&+\displaystyle\sum_{m=-\infty}^{k-1}\Phi(k, m+1)P(m+1)f(m)-\sum_{m=k}^{\infty}\Phi(k,m+1)(I-P(m+1))f(m).\end{array}\end{equation}

\begin{proof} Let $u(k)$ be a bounded solution of \eqref{eqf}.
	  Then if $a<b$, $u(k)$ satisfies \eqref{ab3} for $a\le k\le b$.
	Note that
	\[
	|\Phi(k,a)P(a)u(a)|\le Ke^{-\alpha(k-a)}|u(a)|\to 0
	\] as $a\to -\infty$
	and
	\[|\Phi(k,b)(I-P(b))u(b)|\le Ke^{-\alpha(b-k)}|u(b)|\to 0\]
	as $b\to \infty$. Also the norms of the four summands   in \eqref{ab3}  are bounded by
	\[Ke^{-\alpha(k-m)}\delta\|u\|,\; Ke^{-\alpha(k-m-1)}\|f\|,\;
	Ke^{-\alpha(m-k)}\delta\|u\|,\; Ke^{-\alpha(m+1-k)}\|f\|,
	\]
	respectively, where $\|\cdot\|$ denotes the supremum norm. Let $a\to-\infty$ and $b\to\infty$   in \eqref{ab3}.
Then the sums converge to the infinite sums and \eqref{eqbf} is obtained.
	\medskip
	
	All that remains to show is that \eqref{eqbf} has a unique bounded solution
	which is also a solution of \eqref{eqf}.
	To this end, let $X$ be the Banach space of bounded sequences
	${\bf u}=\{ u(k)\}_{k=-\infty}^{\infty}$ with norm
	\[ \|{\bf u}\| = \sup_{k\in\Z}|u(k)|. \]
	We define the operator $T:X\to X$ by
	\[\begin{array}{rl}  (T{\bf u})_{k}
	&=\displaystyle\sum_{m=-\infty}^{k-1}\Phi(k, m)P(m)B(m)u(m)-\sum_{m=k}^{\infty}\Phi(k,m)(I-P(m))B(m)u(m)\\ \\
	&+\displaystyle\sum_{m=-\infty}^{k-1}\Phi(k, m+1)P(m+1)f(m)-\sum_{m=k}^{\infty}\Phi(k,m+1)(I-P(m+1))f(m).\end{array}\]
	We see that
	\[\begin{array}{rl}  |(T{\bf u})_{k}|
	&\le \displaystyle\left[\sum_{m=-\infty}^{k-1}Ke^{-\alpha(k-m)}+
	\sum_{m=k}^{\infty}Ke^{-\alpha(m-k)}\right]\delta\|u\|\\ \\
	&\displaystyle +\left[\sum_{m=-\infty}^{k-1}Ke^{-\alpha(k-m-1)}+
	\sum_{m=k}^{\infty}Ke^{-\alpha(m-k+1)}\right]\|f\|\\ \\
	&\le \rho(\delta\|u\|+\|f\|)
	\end{array}\]
	so that $T{\bf u}\in X$.
	Next if ${\bf u}= \{u(k)\}^{\infty}_{k=-\infty}$
	and ${\bf v}= \{v(k)\}^{\infty}_{k=-\infty}$ are in $X$, then for all $k$
	\[ \begin{array}{rl}
	&|(T{\bf u})_{k}-(T{\bf v})_{k}|\\ \\
	& = \left|\displaystyle \sum_{m=-\infty}^{k-1}\Phi(k,m)P(m)B(m)(u(m)-v(m))
	\right. \\ \\
	&  \qquad\qquad -\left.\displaystyle\sum_{m=k}^{\infty}\Phi(k, m)(I-P(m))B(m)
	(u(m)- v(m)) \right| \\ \\
	& \leq  \displaystyle \sum_{m=-\infty}^{k-1}Ke^{-\alpha(k-m)}
	\delta|u(m)-v(m)|
	+ {\displaystyle  \sum_{m=k}^{\infty}}Ke^{-\alpha(m-k)}\delta|u(m)-v(m)|. \end{array}\]
	Hence
	\[
	\|T{\bf u}-T{\bf v}\|\leq \rho\delta\|{\bf u}-{\bf v}\|.
	\]
	Since $\rho\delta< 1$, it follows that $T$ is a contraction and hence has a unique fixed point ${\bf u}= \{u(k)\}^{\infty}_{k=-\infty}$ which is the unique
	bounded solution of \eqref{eqbf}.
	Then
	\[\begin{array}{rl} &u(k+1)\\ \\
	&=\displaystyle\sum_{m=-\infty}^{k}\Phi(k+1, m)P(m)B(m)u(m)-\sum_{m=k+1}^{\infty}\Phi(k+1,m)(I-P(m))B(m)u(m)\\ \\
	&+\displaystyle\sum_{m=-\infty}^{k}\Phi(k+1, m+1)P(m+1)f(m)-\sum_{m=k+1}^{\infty}\Phi(k+1,m+1)(I-P(m+1))f(m)\\ \\
	&= A(k)u(k)+A(k)P(k)B(k)u(k)+A(k)(I-P(k))B(k)u(k) \\ \\
	&\qquad\qquad +P(k+1)f(k)+(I-P(k+1))f(k)\\ \\
	&= A(k)u(k)+A(k)B(k)u(k)+f(k). \end{array} \]
	So $u(k)$ is a solution of \eqref{eqf}.
\end{proof}

{\bf Claim 3:} {\it For given $a\in\Z$ and a vector
	$\xi \in {\cal R}(P(a))$, there is a unique solution $u(k)$ of \eqref{perteq}
	bounded in $k\ge a$ such that
	\begin{equation} \label{BC2} P(a)u(a)= \xi.
	\end{equation}
	In fact,
	\begin{equation}\label{ueqplus}\begin{array}{rl}
	u(k) & =\Phi(k, a)\xi
	+ \displaystyle\sum_{m=a}^{k-1}\Phi(k,m)P(m)B(m)u(m) \\ \\
	& \qquad\qquad  -\displaystyle \sum_{m=k}^{\infty}\Phi(k, m)(I-P(m))B(m)u(m)
	\end{array}\end{equation}
	for $k \ge a$.}
\begin{proof} By a similar argument to that used in the proof of Claim 2 (here we need only let $b\to\infty$), we show that if $u(k)$ is a bounded solution of \eqref{perteq} satisfying \eqref{BC2}, then $u(k)$ must satisfy \eqref{ueqplus}. Next we show \eqref{ueqplus} has a unique solution $u(k)$ bounded in $k\ge a$. To this end, let $X$ be the Banach space of sequences
	${\bf u}=\{ u(k)\}^{\infty}_{k=a}$ with norm
	\[ \|{\bf u}\| = \sup_{k\ge a}|u(k)|. \]
	We define the operator $T:X\to X$ by
	\[\begin{array}{rl}  (T{\bf u})_{k}
	&=\displaystyle\Phi(k, a)\xi
	+ \sum_{m=a}^{k-1}\Phi(k,m)P(m)B(m)u(m) \\ \\
	& \qquad\qquad\displaystyle -\sum_{m=k}^{\infty}\Phi(k,m)(I-P(m))B(m)u(m)
	\end{array} \]
	for $k\geq a$. Arguing as in Claim 2 we see that   $T$ maps $X$
		into itself
		and that if ${\bf u}= \{u(k)\}^{\infty}_{k=a}$
		and ${\bf v}= \{v(k)\}^{\infty}_{k=a}$ are in $X$, then
		\[ \|T{\bf u}-T{\bf v}\|\leq \rho\delta\|{\bf u}-{\bf v}\|. \]
	  Since $\rho\delta< 1$, it follows that $T$ is a contraction and hence has a
	unique fixed point ${\bf u}= \{u(k)\}^{\infty}_{k=a}$ which satisfies \eqref{ueqplus}
	for $k \geq a$. Clearly \eqref{BC2} holds and,
		by a similar argument to that used in Claim 2,  for  $k \geq a$ we show that
		\[u(k+1)= A(k)u(k)+A(k)B(k)u(k).\]
	So $u(k)$ is indeed a solution of \eqref{perteq} satisfying \eqref{BC2}. On the
	other hand, it follows from \eqref{ueqplus} that any such solution must be a fixed
	point of $T$. Thus the uniqueness is established.
\end{proof}

{\bf Claim 4:} {\it Given $b\in\Z$ and a vector
	$\eta \in {\cal N}(P(b))$, there is a unique solution $u(k)$ of \eqref{perteq}
	bounded in $k\le b$ such that
	\begin{equation} \label{BC3} (I-P(b))u(b)= \eta.
	\end{equation}
	Moreover,
	\begin{equation}\label{ueqminus}\begin{array}{rl}
	u(k) & =\Phi(k, b)\eta
	+ \displaystyle\sum_{m=-\infty}^{k-1}\Phi(k,m)P(m)B(m)u(m) \\ \\
	& \qquad\qquad  -\displaystyle \sum_{m=k}^{b-1}\Phi(k, m)(I-P(m))B(m)u(m)
	\end{array}\end{equation}
	for $k \leq b$.}

\begin{proof} By a similar argument to that used in the proof of Claim 2
	(here we need only let $a\to -\infty$), we show that if $u(k)$ is a bounded solution
	of \eqref{perteq} satisfying \eqref{BC3}, then $u(k)$ must satisfy \eqref{ueqminus}.
	Next we show \eqref{ueqminus} has a unique solution $u(k)$ bounded in $k\le b$.
	To this end, let $X$ be the Banach space of sequences
	${\bf u}=\{ u(k)\}_{-\infty}^{b}$ with norm
	\[ \|{\bf u}\| = \sup_{k\leq b}|u(k)|. \]
	We define the operator $T:X\to X$ by
	\[\begin{array}{rl}  (T{\bf u})_{k}
	&=\displaystyle\Phi(k, b)\eta
	+ \sum_{m=-\infty}^{k-1}\Phi(k,m)P(m)B(m)u(m) \\ \\
	& \qquad\qquad\displaystyle -\sum_{m=k}^{b-1}\Phi(k,m)(I-P(m))B(m)u(m)
	\end{array} \]
	for $k\leq b$.   By similar arguments to those used in the
		proofs of the previous two claims, we show that $T$ has a unique fixed
		${\bf u}= \{u(k)\}_{k=-\infty}^{b}$ for which \eqref{BC3} holds and that for $k\le b-1$
		\[u(k+1)= A(k)u(k)+A(k)B(k)u(k).\]
	 So $u(k)$ is indeed a solution of \eqref{perteq} satisfying \eqref{BC3}. On the
	other hand, it follows from \eqref{ueqminus} that any such solution must be a fixed
	point of $T$. Thus the uniqueness is established.
\end{proof}

Now we complete the proof of Theorem \ref{roughthm}.
\medskip

{\bf Definition of the projections and proof of their invariance properties:}

Let $\Psi(k,m)$ be the transition matrix for \eqref{perteq}. For $m\in\Z$, we define
\[ V^+(m)=\left\{\xi:\sup_{k\ge m}|\Psi(k,m)\xi|<\infty\right\},\]
and $V^-(m)$ as the subspace of those $\xi$ for which there exists a bounded solution $y(k)$ of \eqref{perteq}
on $k\le m$ for which $y(m)=\xi$. It is clear that for $k\ge m$
\[\Psi(k,m)(V^+(m))\subset V^+(k),\quad \Psi(k,m)(V^-(m))\subset V^-(k).\]
Note it follows from Claim 2 with $f=0$ that $0$ is the unique bounded solution
of \eqref{perteq}. Hence
\[ V^+(m)\cap V^-(m) =\{0\}.\]
By Claims 3 and 4, the dimension of $V^+(m)$ is the rank of $P(m)$ and the
dimension of $V^-(m)$ is $n$ minus the rank of $P(m)$. So
\[ V^+(m)\oplus V^-(m) =\R^n.\]
This implies that for $k\ge m$, $\Psi(k,m)$ is one to one on $V^-(m)$ for
if $\Psi(k,m)\xi=0$ for some $\xi\in V^-(m)$, then $\xi\in V^+(m)$ also
and so $\xi=0$. It follows that for $k\ge m$,
\[ \Psi(k,m)(V^-(m))= V^-(k).\]
Now let $Q(m)$ be the projection with range $V^+(m)$ and nullspace $V^-(m)$.
 From the invariance properties for $V^+(m)$ and $V^-(m)$, it follows that
$\Psi(k,m)Q(m)=Q(k)\Psi(k,m)$ for $k\ge m$ and that $\Psi(k,m)$ maps
${\cal N}Q(m)$ bijectively on to ${\cal N}Q(k)$ if $k\ge m$.
\medskip

{\bf Dichotomy inequalities:} As a first step we prove the inequalities
\begin{equation}\label{ineq1}
|\Psi(k,m)Q(m)|\le (1-\rho\delta)^{-1}K,\quad k\ge m
\end{equation}
and
\begin{equation}\label{ineq2}
|\Psi(k,m)(I-Q(m))|\le (1-\rho\delta)^{-1}K,\quad k\le m-1.
\end{equation}

To prove these inequalities, we define for given $m$ and $\xi$,
\[
f(k)=\begin{cases} \xi/K & k=m-1\\ 0 & k\neq m-1.\end{cases}
\]
By Claim 2, \eqref{eqf} has a unique bounded solution $x(k)$ which satisfies
\begin{align*}
x(k)&=\sum^{k-1}_{p=-\infty}\Phi(k,p)P(p)B(p)x(p)
+\sum^{k-1}_{p=-\infty}\Phi(k,p+1)P(p+1)f(p)\\
&\qquad-
\sum^{\infty}_{p=k}\Phi(k,p)(I-P(p))B(p)x(p)
-\sum^{\infty}_{p=k}\Phi(k,p+1)(I-P(p+1))f(p)\\
&=\sum^{\infty }_{p=-\infty}G(k,p)B(p)x(p)+G(k,m)\xi/K,
\end{align*}
where
\[ G(k,p)=\begin{cases}\Phi(k,p)P(p) & k>p\\
-\Phi(k,p)(I-P(p)) & k\le p.\end{cases}
\]
Then for all $k$
\[|x(k)| \le \sum^{\infty}_{p=-\infty}Ke^{-\alpha|k-p|}\delta|x(p)|
+ K^{-1}Ke^{-\alpha|k-m|}|\xi|\le \rho\delta\|x\|+|\xi|,
\]
where $\|x\|=\sup_{k\in\Z}|x(k)|$. It follows that
\begin{equation}\label{eq4} \|x\|\le (1-\rho\delta)^{-1}|\xi|.\end{equation}
Note that $x(k)$ is a bounded solution of \eqref{perteq} for $k\le m-2$ so that $x(m-1)\in V^-(m-1)$. Then
\begin{equation}\label{eq3} x(m)=[A(m-1)+B(m-1)]x(m-1)+\xi/K,\end{equation}
where $[A(m-1)+B(m-1)]x(m-1)=\Psi(m,m-1)x(m-1)\in V^-(m)$. Next $x(k)$ is a bounded solution
of \eqref{perteq} for $k\ge m$ so that $x(m)\in V^+(m)$. Now, from \eqref{eq3},
\[ x(m)=Q(m)x(m)=K^{-1}Q(m)\xi.\]
Then if $k\ge m$,
\[ x(k)=\Psi(k,m)x(m)=K^{-1}\Psi(k,m)Q(m)\xi.\]
It follows from this and \eqref{eq4} that for $k\ge m$
\[ K^{-1}|\Psi(k,m)Q(m)\xi| \le (1-\rho\delta)^{-1}|\xi|.\]
Since $\xi$ is arbitrary, \eqref{ineq1} follows.
\medskip

To prove \eqref{ineq2}, multiply \eqref{eq3} by $I-Q(m)$. Then
\[ 0=(I-Q(m))\Psi(m,m-1)x(m-1)+(I-Q(m))\xi/K\]
so that, by invariance,
\[0=\Psi(m,m-1)(I-Q(m-1))x(m-1)+(I-Q(m))\xi/K.\]
Hence since $x(m-1)\in V^-(m-1)$,
\[0=\Psi(m,m-1)x(m-1)+(I-Q(m))\xi/K.\]
So, since $(I-Q(m))\xi/K \in V^-(m)$ and $x(m-1)\in V^-(m-1)$,
\[ x(m-1)=-\Psi(m-1,m)(I-Q(m))\xi/K.\]
Now for $k\le m-1$, $x(k)\in V^-(k)$ because $x(k)$ is the value at $k$ of a bounded solution on $(-\infty,k]$
of \eqref{perteq}. Since $x(m-1)=\Psi(m-1,k)x(k)\in V^-(m-1)$, it follows that
\[ x(k)=\Psi(k,m-1)x(m-1)=-\Psi(k,m-1)\Psi(m-1,m)(I-Q(m))\xi/K.\]
Since, when restricted to the respective nullspaces, $\Psi(k,m-1)\Psi(m-1,m)$
is the inverse of $\Psi(m,k)$, we have
\[ x(k)=-\Psi(k,m)(I-Q(m))\xi/K.\]
It follows that for $k\le m-1$,
\[|\Psi(k,m)(I-Q(m))\xi|/K \le (1-\rho\delta)^{-1}|\xi|\]
and \eqref{ineq2} follows.
\medskip

Now we prove the dichotomy inequalities. Since $\Psi(k,m)Q(m)\xi$ is a bounded
solution of \eqref{perteq} in $k\ge m$, it follows from Claim 3 that
\begin{align}\label{dich1}
&\Psi(k,m)Q(m)\xi \nonumber\\
&=\Phi(k,m)P(m)Q(m)\xi+\sum^{k-1}_{p=m}\Phi(k,p)P(p)B(p)\Psi(p,m)Q(m)\xi\\
&\qquad-
\sum^{\infty}_{p=k}\Phi(k,p)(I-P(p))B(p)\Psi(p,m)Q(m)\xi. \nonumber
\end{align}
and since $\Psi(k,m)(I-Q(m))\xi$ is a bounded
solution of \eqref{perteq} in $k\le m$, it follows from Claim 4 that
\begin{align}\label{dich2}
\Psi(k,m)(I-Q(m))\xi &=\Phi(k,m)(I-P(m))(I-Q(m))\xi \nonumber\\
&\quad-\sum^{m-1}_{p=k}\Phi(k,p)(I-P(p))B(p)\Psi(p,m)(I-Q(m))\xi\\
&\qquad+
\sum^{k-1}_{p=-\infty}\Phi(k,p)P(p)B(p)\Psi(p,m)(I-Q(m))\xi.\nonumber
\end{align}
Taking $k=m$ in \eqref{dich2}, we get
\begin{align*}
(I-Q(m))\xi
&=(I-P(m))(I-Q(m))\xi\\
&+
\sum^{m-1}_{p=-\infty}\Phi(m,p)P(p)B(p)\Psi(p,m)(I-Q(m))\xi
\end{align*}
from which it follows that
\[P(m)Q(m)\xi=P(m)\xi-\sum^{m-1}_{p=-\infty}\Phi(m,p)P(p)B(p)\Psi(p,m)(I-Q(m))
\xi.
\]
If we substitute this into \eqref{dich1}, we obtain
\begin{align}\label{dich3}
&\Psi(k,m)Q(m)\xi \nonumber\\
&=\Phi(k,m)P(m)\xi-\sum^{m-1}_{p=-\infty}\Phi(k,p)P(p)B(p)\Psi(p,m)(I-Q(m))\xi
\nonumber\\
&\qquad+\sum^{k-1}_{p=m}\Phi(k,p)P(p)B(p)\Psi(p,m)Q(m)\xi\\
&\qquad\qquad-
\sum^{\infty}_{p=k}\Phi(k,p)(I-P(p))B(p)\Psi(p,m)Q(m)\xi. \nonumber
\end{align}
Since this holds for any $\xi \in \R^n$, we find that for $k\ge m$,
\begin{align*}
|\Psi(k,m)Q(m)|
&\le Ke^{-\alpha(k-m)}+\sum^{m-1}_{p=-\infty}Ke^{-\alpha(k-p)}\delta |\Psi(p,m)(I-Q(m))|\\
&\qquad +\sum^{\infty}_{p=m}Ke^{-\alpha|k-p|}\delta|\Psi(p,m)Q(m)|\\
&\le Ke^{-\alpha(k-m)}+\sum^{m-1}_{p=-\infty}Ke^{-\alpha(k-p)}\delta (1-\rho\delta)^{-1}K\\
&\qquad +\sum^{\infty}_{p=m}Ke^{-\alpha|k-p|}\delta|\Psi(p,m)Q(m)|\quad
{\rm using\;} \eqref{ineq2}\\
&\le Ke^{-\alpha(k-m)}+K^2\delta(1-\rho\delta)^{-1}e^{-\alpha}(1-e^{-\alpha})^{-1}e^{-\alpha(k-m)}\\
&\qquad+\sum^{\infty}_{p=m}Ke^{-\alpha|k-p|}\delta|\Psi(p,m)Q(m)|.
\end{align*}
Hence $u_k=|\Psi(k,m)Q(m)|$ satisfies
\begin{align*} u_k
&\le K[1+K\delta(1-\rho\delta)^{-1}(e^{\alpha}-1)^{-1}]e^{-\alpha(k-m)}\\
&\qquad +Ke^{\alpha}\delta\left[\sum^{k-1}_{p=m}e^{-\alpha(k-p-1)}u_m
+\sum^{\infty}_{p=k}e^{-\alpha(p+1-k)}u_m\right].
\end{align*}
By Lemma \ref{dislem2} (i), it follows that for $k\ge m$,
\[
u_k\le K_1e^{-\beta(k-m)},
\]
where
\[
K_1=K(1+K\delta(1-\rho\delta)^{-1}(e^{\alpha}-1)^{-1})D_1
\]
and hence that for $k\ge m$,
\begin{equation}\label{dicheq1} |\Psi(k,m)Q(m)|\le K_1e^{-\beta(k-m)}.\end{equation}
\medskip

Now we prove a similar inequality for $|\Psi(k,m)(I-Q(m))|$ when $k\le m$. To this end, take $k=m$ in \eqref{dich1} to get
\[Q(m)\xi=P(m)Q(m)\xi-
\sum^{\infty}_{p=m}\Phi(m,p)(I-P(p))B(p)\Psi(p,m)Q(m)\xi
\]
so that
\[
(I-P(m))Q(m)\xi=-
\sum^{\infty}_{p=m}\Phi(m,p)(I-P(p))B(p)\Psi(p,m)Q(m)\xi.
\]
Then using this in \eqref{dich2}
\begin{align*}
\Psi(k,m)(I-Q(m))\xi
&=\Phi(k,m)(I-P(m))\xi\\
&\quad+\sum^{\infty}_{p=m}\Phi(k,p)(I-P(p))B(p)\Psi(p,m)Q(m)\xi\\
&\quad\quad-\sum^{m-1}_{p=k}\Phi(k,p)(I-P(p))B(p)\Psi(p,m)(I-Q(m))\xi\\
&\quad\quad+
\sum^{k-1}_{p=-\infty}\Phi(k,p)P(p)B(p)\Psi(p,m)(I-Q(m))\xi.
\end{align*}
So using  \eqref{ineq1},
\begin{align*}
|\Psi(k,m)(I-Q(m))|
&\le Ke^{-\alpha(m-k)}+K\delta\sum^{\infty}_{p=m}e^{-\alpha(p-k)}(1-\rho\delta)^{-1}K\\
&\qquad+Ke^{\alpha}\delta\sum^{m-1}_{p=k}e^{-\alpha(p+1-k)}|\Psi(p,m)(I-Q(m))|\\
&\qquad\qquad+
Ke^{\alpha}\delta\sum^{k-1}_{p=-\infty}e^{-\alpha(k-p-1)}|\Psi(p,m)(I-Q(m))|.
\end{align*}
So with $u_k=|\Psi(k,m)(I-Q(m))|$, we have
\begin{align*}
u_k
&\le K\left[1+K(1-\rho\delta)^{-1}(1-e^{-\alpha})^{-1}\delta\right]e^{-\alpha(m-k)}\\
&\qquad +Ke^{\alpha}\delta \sum^{m-1}_{p=k}e^{-\alpha(p-k+1)}u_p+Ke^{\alpha}\delta\sum^{k-1}_{p=-\infty}e^{-\alpha(k-p-1)}u_p.
\end{align*}
Then by Lemma \ref{dislem2} (ii),
we get
\[ |u_k|\le K_2e^{-\gamma(m-k)},\quad k\le m,\]
where
\[ K_2=K(1+K\delta(1-\rho\delta)^{-1}(1-e^{-\alpha})^{-1})D_2.\]
Hence
\[ |\Psi(k,m)(I-Q(m))|\le K_2e^{-\gamma(m-k)},\quad k\le m.\]
Since $\gamma\ge \beta$ and $(e^{\alpha}-1)^{-1}<(1-e^{-\alpha})^{-1}$, we conclude from the last inequality
and \eqref{dicheq1} that for $k\ge m$,
\[
|\Psi(k,m)Q(m)|\le Le^{-\beta(k-m)},\quad |\Psi(m,k)(I-Q(k))|\le Le^{-\beta(k-m)}.
\]

\medskip

{\bf Continuity of projection:}
Put $k=m$ in \eqref{dich3} to get
\begin{align*}
Q(m)\xi-P(m)\xi
&=-\sum^{m-1}_{p=-\infty}\Phi(m,p)P(p)B(p)\Psi(p,m)(I-Q(m))\xi\\
&\qquad-
\sum^{\infty}_{p=m}\Phi(m,p)(I-P(p))B(p)\Psi(p,m)Q(m)\xi
\end{align*}
so that
\begin{align*}
&|Q(m)-P(m)|\\
&\le \sum^{m-1}_{p=-\infty}Ke^{-\alpha(m-p)}\delta Le^{-\beta(m-p)}+\sum^{\infty}_{p=m}Ke^{-\alpha(p-m)}\delta Le^{-\beta(p-m)}\\
&= KL(1+e^{-(\alpha+\beta)})(1-e^{-(\alpha+\beta)})^{-1}\delta.
\end{align*}
This completes the proof of the theorem for the case $J=\Z$.
\medskip

{\bf The cases $[a,\infty)$ and $(-\infty,b]$:}

We deduce these two cases from the $\Z$ case using the following lemma,
 which generalizes Lemma 5 in Zhou and Zhang \cite{ZZ} to the noninvertible case.
\begin{lemma}\label{extending} Let $J\subset\Z$ be an interval in $\Z$ and assume
	\begin{equation}\label{eq}x(k+1)=A(k)x(k),\quad x\in\R^n\end{equation}
	has an exponential dichotomy on $J$ with constants $K$ and $\alpha$
	and projection $P(k)$. Then there exists
	\begin{equation}\label{eqext}x(k+1)=\tilde A(k)x(k),\quad k\in\Z,
	\end{equation}
	such that
	\[
	\tilde A(k)=A(k), \quad \hbox{ for $k\in J$}
	\]
	and \eqref{eqext} has an exponential dichotomy on $\Z$ with the same constant $K$ and exponent $\al$.
\end{lemma}
\begin{proof}
	We assume \eqref{eq} has an exponential dichotomy on $J=[a,b]$, where $-\infty\le a < b \le \infty$ with constants $K$ and $\alpha$ and projection $P(k)$.
	Of course if, for example $a=-\infty$ instead of $[a,b]$ we consider $(-\infty,b]$ and similarly when $b=\infty$. However we prove the result when $a,b\in\Z$ since when $a=-\infty$ or $b=\infty$ the proof has only to be modified in a trivial way.
	\medskip
	
	For $k\le a-1$, we define
	\[
	\tilde A(k) ={\cal A}=e^{-\alpha}P(a)+e^{\alpha}(I-P(a))\]
	and for $k\ge b$ we define
	\[
	\tilde A(k) ={\cal B}=e^{-\alpha}P(b)+e^{\alpha}(I-P(b)).\]
	Note that ${\cal A}$ and ${\cal B}$ are invertible and
	\[ {\cal A}P(a)=P(a){\cal A},\quad {\cal B}P(b)=P(b){\cal B}.\]
	Also for all integers $k$,
	\[ {\cal A}^k=e^{-\alpha k}P(a)+e^{\alpha k}(I-P(a))\]
	so that for $k\ge 0$
	\[ |{\cal A}^kP(a)|=e^{-\alpha k}|P(a)| \le Ke^{-\alpha k}\]
	and for $k\le 0$
	\[ |{\cal A}^k(I-P(a))|=e^{\alpha k}|I-P(a)| \le Ke^{\alpha k}.\]
	This means that $x(k+1)=\tilde A(k)x(k)$ has an exponential dichotomy on
	$k\le a$ with constant projection $P(a)$ and constants $K$, $\alpha$.
	Similarly for $k\ge 0$
	\[ |{\cal B}^kP(b)|=e^{-\alpha k}|P(b)| \le Ke^{-\alpha k}\]
	and for $k\le 0$
	\[ |{\cal B}^k(I-P(b))|=e^{\alpha k}|I-P(b)| \le Ke^{\alpha k}.\]
	This means that $x(k+1)=\tilde A(k)x(k)$ has an exponential dichotomy on
	$k\ge b$ with constant projection $P(b)$  and constants $K$, $\alpha$.
	\medskip

	Let $\tilde\Phi(k,m)$ be the transition matrix of \eqref{eqext}.
	Of course $\tilde\Phi(k,m)=\Phi(k,m)$, the transition matrix for \eqref{eq}, when $a\le m\le k\le b$. We set
	\[
	\tilde P(k) = \left \{\begin{array}{ll}
	P(a) & \hbox{if $k\le a$}	\\
	P(k) & \hbox{if $a\le k\le b$}	\\
	P(b) & \hbox{if $k\ge b$}.
	\end{array}\right .\]
	
	We have dichotomies on $k\le a$, $a\le k\le b$ and $k\ge b$ with constants
	$K$ and $\alpha$ and projection $\tilde P(k)$. The invariance
	$\tilde A(k)\tilde P(k)=\tilde P(k+1)\tilde A(k)$ holds for
	$k\le a-1$, $a\le k\le b-1$, $b\le k$ and hence for all $k$.
	\medskip
	
	Also $\tilde\Phi(k,m)$ maps ${\cal N}\tilde P(m)$ bijectively
	onto ${\cal N}\tilde P(k)$ if $k\ge m$ since it holds for $a\le m\le k\le b$
	and also for $m\le k\le a$ and $b\le m\le k$ since ${\cal A}$ and ${\cal B}$
	are invertible. If, for example, $m<a$ and $k>b$, then
	\[ \tilde\Phi(k,m)={\cal B}^{k-b}\Phi(b,a){\cal A}^{a-m},\]
	which maps ${\cal N}\tilde P(m)$ bijectively
	onto ${\cal N}\tilde P(k)$ since ${\cal A}^{a-m}$ maps ${\cal N}\tilde P(m)$ bijectively onto ${\cal N}\tilde P(a)$, $\Phi(b,a)$ maps ${\cal N}\tilde P(a)$ bijectively onto ${\cal N}\tilde P(b)$ and ${\cal B}^{k-b}$ maps ${\cal N}\tilde P(b)$ bijectively onto ${\cal N}\tilde P(k)$. The other cases can be similarly treated.
	\medskip
	
	We have dichotomies on $k\le a$, $a\le k\le b$ and $k\ge b$ with constants
	$K$ and $\alpha$ and projection $\tilde P(k)$. So
	\[ |\tilde \Phi(k,m)\tilde P(m)|\le Ke^{-\alpha(k-m)},\quad
	|\tilde \Phi(m,k)\tilde(I-P(k))|\le Ke^{-\alpha(k-m)}\]
	if $m\le k\le a$ or $a\le k\le b$ or $b\le m\le k$. Suppose, for example,
	that $m<a$ and $b<k$. Then
	\begin{align*} |\tilde \Phi(k,m)\tilde P(m)|
	&=|\tilde \Phi(k,b)\tilde\Phi(b,a)\tilde\Phi(a,m)\tilde P(m)|\\
	&=|\tilde \Phi(k,b)\tilde P(b)\tilde\Phi(b,a)\tilde P(a)\tilde\Phi(a,m)\tilde P(m)|
	\quad{\rm by\;invariance}\\
	&=|e^{-\alpha(k-b)}P(b)\Phi(b,a)P(a)e^{-\alpha(a-m)}P(a)|\\
	&=|e^{-\alpha(k-b+a-m)}\Phi(b,a)P(a)|\\
	&\le e^{-\alpha(k-b+a-m)}Ke^{-\alpha(b-a)}\\
	&=Ke^{-\alpha(k-m)}.
	\end{align*}
	Next
	\begin{align*} |\tilde \Phi(m,k)(I-\tilde P(k))|
	&=|\tilde \Phi(m,a)\tilde\Phi(a,b)\tilde\Phi(b,k)(I-\tilde P(k))|\\
	&=|\tilde \Phi(m,a)(I-\tilde P(a))\tilde\Phi(a,b)(I-\tilde P(b))\tilde\Phi(b,k)(I-\tilde P(k))|\\
	&\quad\quad{\rm by\;invariance}\\
	&=|e^{-\alpha(a-m)}(I-P(a))\Phi(a,b)(I-P(b))e^{-\alpha(k-b)}(I-P(b))|\\
	&=|e^{-\alpha(a-m+k-b)}\Phi(a,b)(I-P(b))|\\
	&\le e^{-\alpha(k-b+a-m)}Ke^{-\alpha(b-a)}\\
	&=Ke^{-\alpha(k-m)}.
	\end{align*}
	Other cases can be similarly considered.
	\medskip
	
	This concludes the proof when $a,b\in\Z$. If $b=\infty$ the proof is the same but we do not need ${\cal B}$ and we just have the two intervals $k\le a$ and $k\ge a$.
	When $a=-\infty$, we do not need ${\cal A}$ and we just have the two intervals $k\le b$ and $k\ge b$.
	
\end{proof}

Then to prove Theorem \ref{roughthm} for an interval $[a,\infty)$, we extend $A(k)$ to $\tilde A(k)$ as in Lemma \ref{extending} and we extend the perturbation to $\Z$ by defining $B(k)=0$ for $k<a$. Then this perturbed system has a dichotomy on $\Z$ with constants as given in the theorem. Then we restrict back to the interval $[a,\infty)$. The interval $(-\infty,b]$ can be similarly considered. Thus the proof of Theorem \ref{roughthm} is completed.
\end{proof}

\begin{remark} Using a similar proof, we can prove an analogous theorem for differential equations. In this theorem it is shown that the constants for the perturbed system approach those for the unperturbed system as the perturbation tends to zero.
This solves a long standing problem; the roughness theorems in, for example, Coppel \cite{C} do not prove this. We include it here in case it has not been noticed before. Of course, in the discrete case, it was already proved in Henry \cite{H}.

\begin{theorem} \label{roughthmDE} Let $A(t)$ be a piecewise continuous $n\times n$ matrix function defined on an interval $J=[a,\infty)$ or $(-\infty,b]$ or $(-\infty,\infty)$. Suppose
\[ \dot x=A(t)x \]
has an exponential dichotomy on $J$ with projection $P(t)$ and constants $K$, $\alpha$. Then,  if $|B(t)|\le \delta$, where
\[2K\alpha^{-1}\delta<1,\]
the perturbed system
\[\dot x=[A(t)+B(t)]x\]
also has an exponential dichotomy on $J$ with projection $Q(t)$ of the same rank
with constants $L$ and $\beta$, where
\[\beta=\alpha\sqrt{1-2K\alpha^{-1}\delta},\quad L=\frac{2K(1+K\delta(1-2K\alpha^{-1}\delta)^{-1}\alpha^{-1})}
{1+\sqrt{1-2K\alpha^{-1}\delta}},\]
such that
\[|Q(t)-P(t)|\le 2KL(\alpha+\beta)^{-1}\delta.\]
\end{theorem}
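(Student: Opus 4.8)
The plan is to mirror the proof of Theorem \ref{roughthm} for $J=\Z$, replacing sums by integrals throughout. For $t\ge s$ the dichotomy gives $|\Phi(t,s)P(s)|\le Ke^{-\alpha(t-s)}$ and for $t\le s$ we have $|\Phi(t,s)(I-P(s))|\le Ke^{-\alpha(s-t)}$, where $\Phi(t,s)$ is the transition matrix of $\dot x=A(t)x$ (interpreted on the nullspaces via the inverse when $t<s$). With the Green's function
\[
G(t,s)=\begin{cases}\Phi(t,s)P(s)& t>s\\ -\Phi(t,s)(I-P(s))& t\le s,\end{cases}
\]
a solution of $\dot x=[A(t)+B(t)]x$ bounded on $\R$ must satisfy the integral equation $u(t)=\int_{-\infty}^{\infty}G(t,s)B(s)u(s)\,ds$, and there are the two one-sided versions prescribing $P(a)u(a)$ on $[a,\infty)$ and $(I-P(b))u(b)$ on $(-\infty,b]$. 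These are the continuous analogues of Claims 2, 3 and 4; they are derived from the variation-of-constants formula in place of the discrete summation formula, using piecewise continuity of $A$ so that solutions are continuous and piecewise $C^1$.

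First I would set up the contraction argument. The operator $(Tu)(t)=\int_{-\infty}^{\infty}G(t,s)B(s)u(s)\,ds$ (plus the relevant boundary term) satisfies, using $\int_{-\infty}^{\infty}Ke^{-\alpha|t-s|}\,ds=2K\alpha^{-1}$,
\[
\|Tu-Tv\|\le 2K\alpha^{-1}\delta\,\|u-v\|,
\]
so $2K\alpha^{-1}\delta<1$ makes $T$ a contraction; here $2K\alpha^{-1}$ is precisely the continuous limit of the discrete constant $\rho=K(1+e^{-\alpha})(1-e^{-\alpha})^{-1}$. This gives existence and uniqueness of the bounded solutions. I would then define $V^+(t)$ and $V^-(t)$ as the spaces of initial values of solutions bounded in forward and in backward time, respectively. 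As in the $\Z$ case, uniqueness of the bounded solution on $\R$ forces $V^+(t)\cap V^-(t)=\{0\}$, while the one-sided existence results give $\dim V^+(t)=\mathrm{rank}\,P(t)$, so $V^+(t)\oplus V^-(t)=\R^n$. The projection $Q(t)$ with range $V^+(t)$ and nullspace $V^-(t)$ is then invariant under $\Psi(t,s)$, the transition matrix of the perturbed equation, and $\Psi(t,s)$ maps ${\cal N}Q(s)$ bijectively onto ${\cal N}Q(t)$ for $t\ge s$.

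Next I would prove the dichotomy inequalities in two stages, exactly paralleling \eqref{ineq1}, \eqref{ineq2} and the estimates following them. The rough bounds $|\Psi(t,s)Q(s)|,\ |\Psi(s,t)(I-Q(t))|\le (1-2K\alpha^{-1}\delta)^{-1}K$ come from the integral equation and an estimate as in \eqref{eq4}. Substituting these back and isolating $P(s)Q(s)\xi$ (as in the passage leading to \eqref{dich3}) yields, for $u(t)=|\Psi(t,s)Q(s)|$ with $t\ge s$, an inequality of the shape
\[
u(t)\le D\,e^{-\alpha(t-s)}+K\delta\int_s^{\infty}e^{-\alpha|t-r|}u(r)\,dr .
\]
The key step is the continuous analogue of Lemma \ref{dislem2}. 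Since $\tfrac1{2\alpha}e^{-\alpha|t-r|}$ is the Green's function of $-\tfrac{d^2}{dt^2}+\alpha^2$, this integral inequality is governed, heuristically, by the differential inequality $-u''+(\alpha^2-2K\alpha\delta)u\le(\text{source})$, whose relevant decay rate is the positive root $\beta=\alpha\sqrt{1-2K\alpha^{-1}\delta}$; a rigorous version is obtained either by a direct comparison argument or by passing to the limit in Lemma \ref{dislem2}, since $-\log(\cosh\alpha-\sqrt{\sinh^2\alpha-2K\delta\sinh\alpha})$ tends to $\alpha\sqrt{1-2K\alpha^{-1}\delta}$ under the natural scaling. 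This gives $|\Psi(t,s)Q(s)|\le Le^{-\beta(t-s)}$ for $t\ge s$, and the symmetric argument provides $|\Psi(s,t)(I-Q(t))|\le Le^{-\beta(t-s)}$ for $t\ge s$, with $L$ as stated.

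Finally, the continuity estimate follows by putting $t=s$ in the representation corresponding to \eqref{dich3}, whence
\[
|Q(s)-P(s)|\le \int_{-\infty}^{\infty}Ke^{-\alpha|s-r|}\,\delta\,Le^{-\beta|s-r|}\,dr=2KL(\alpha+\beta)^{-1}\delta .
\]
The half-line cases $[a,\infty)$ and $(-\infty,b]$ reduce to the $\R$ case by the obvious continuous-time analogue of Lemma \ref{extending}: outside $J$ extend $A(t)$ by the constant generator $\mathcal A=-\alpha P(a)+\alpha(I-P(a))$ (so that $e^{\mathcal A(t-a)}=e^{-\alpha(t-a)}P(a)+e^{\alpha(t-a)}(I-P(a))$ has a dichotomy with the same $K,\alpha$), and likewise with $P(b)$ at the other end, and set $B(t)=0$ there; then restrict back. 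I expect the main obstacle to be the continuous Gronwall lemma yielding the sharp exponent $\beta$, since the two-sided kernel must be handled carefully; the remainder is a routine translation of sums into integrals.
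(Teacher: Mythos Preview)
The paper does not give a proof of this theorem; it only states it in a remark and asserts that it can be obtained ``using a similar proof'' to Theorem~\ref{roughthm}. Your proposal carries out exactly that translation --- Claims 1--4, the rough bounds \eqref{ineq1}--\eqref{ineq2}, the refinement via a continuous analogue of Lemma~\ref{dislem2} (correctly yielding $\beta=\alpha\sqrt{1-2K\alpha^{-1}\delta}$), the projection estimate, and the extension lemma for the half-line cases --- and is precisely the approach the paper has in mind.
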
	
\end{remark}
\medskip

\section{Finite time conditions for dichotomy}

We study difference equations
\begin{equation}\label{lineq1} x(k+1)=A(k)x(k) \end{equation}
If $I$ is an interval of finite length, then for an arbitrary $\alpha>0$ and for an arbitrary invariant sequence $P(k)$ of projections
such that $A(k):{\cal N}P(k)\to {\cal N}P(k+1)$ is invertible, we can ensure equation \eqref{lineq1} has an {\it exponential dichotomy} on the interval $I$ simply by choosing $K$ appropriately. However, in spite of this, it turns out that this concept does have some use provided the constants $K$ and $\alpha$ are restricted suitably, as seen in the invertible case in \cite{PK1}.
\medskip

We could extend what was done in \cite{PK1} for the invertible case to the noninvertible case. However this has already been done in \cite{DMS} in a more general situation. Another related paper is \cite{BA}. We content ourselves here with stating the main result. It can be proved by a fairly straightforward modification of the proof in \cite{PK1}. Note in this
theorem we use Definition \ref{EDdef} as our definition of dichotomy.
\medskip

\begin{theorem} \label{thm2ft} Let $A(k)$ be matrix function defined on $\Z$ such that
$$ |A(k)|\le M $$
and such that the difference equation \eqref{lineq1} has for some fixed positive integer $N$ an exponential dichotomy on intervals
$[a,a+N]$ with uniform constant $K$ and exponent $\alpha$ for $a$ in a relatively dense set of integers (that is, there is a positive integer $L$ such that every interval of length $L$ in $Z$ contains a point of the set). Let
$$ \alpha>\beta>0,\quad \bar K>4K^8.$$
Then there is a function $N_0(M,K,\bar K,L,\alpha,\beta)$ such that if
\[N\ge  N_0(M,K,\bar K, L,\alpha,\beta),\]
 \eqref{lineq1} has an exponential dichotomy on $\Z$ with exponent $\beta$ and constant $\bar K$.
\end{theorem}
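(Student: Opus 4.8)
The plan is to follow Palmer's finite-time argument from \cite{PK1}, replacing its free use of $\Phi^{-1}$ by the restricted inverses on the nullspaces that are available here. Mirroring the construction in the proof of Theorem \ref{roughthm}, I would define, for each $k\in\Z$,
\[
E^s(k)=\Bigl\{\xi:\sup_{j\ge k}|\Phi(j,k)\xi|<\infty\Bigr\},
\]
and $E^u(k)$ as the set of $\xi$ admitting a solution of \eqref{lineq1} on $(-\infty,k]$ that is bounded and equals $\xi$ at $k$. These two families are manifestly invariant ($A(k)E^s(k)\subseteq E^s(k+1)$, and $A(k)$ carries $E^u(k)$ into $E^u(k+1)$), so once I show that they are complementary with a uniformly bounded projection $\tilde P(k)$ (range $E^s$, nullspace $E^u$) and satisfy the exponential estimates, the invariance \eqref{k0} and the bounds of Definition \ref{EDdef} come for free. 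All of the work is therefore in the estimates and the splitting, and this is exactly where the largeness of $N$ and the relative density of admissible left endpoints enter.

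\medskip

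The key mechanism is a \emph{local trapping estimate}. Fix an admissible interval $[a,a+N]$ with projection $P_a$ and suppose $x(\cdot)$ is a solution bounded on $[a,a+N]$. Splitting $x(a)=P_a(a)x(a)+(I-P_a(a))x(a)$ and applying \eqref{newk1} and \eqref{k1a}, the unstable component is forced to obey
\[
|(I-P_a(a))x(a)|\le K^{2}e^{-\alpha N}\sup_{a\le j\le a+N}|x(j)|,
\]
since otherwise the expansion in \eqref{k1a} would drive $|x(a+N)|$ above the supremum. Thus on every length-$N$ block a bounded orbit is pinned to within $K^{2}e^{-\alpha N}$ of the local stable subspace. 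Chaining this over a sequence of admissible intervals with consecutive left endpoints at distance $\le L$ (possible by relative density), and summing the geometric series of trapping errors, yields a global forward bound
\[
|\Phi(j,k)\xi|\le \bar K\,e^{-\beta(j-k)}|\xi|,\qquad \xi\in E^s(k),\ j\ge k,
\]
for any prescribed $\beta<\alpha$ provided $N$ is large. The backward estimate on $E^u$ is obtained by running the same argument through the restricted inverses $\Phi(m,k):{\cal N}P(k)\to{\cal N}P(m)$, using \eqref{k1a3}; here \eqref{invPk4} is essential, as it guarantees that ${\cal N}(\Phi(j,m))\subset{\cal R}P(m)$ so the backward comparison never leaves the well-behaved nullspace bundle.

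\medskip

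Next I would establish the splitting. The trapping estimate, applied to overlapping admissible intervals, shows that their local projections agree up to $O(e^{-\alpha N/2})$ in the interiors of the overlaps, so for $N$ large all the local ranks coincide with a common value $r$; relative density propagates this across $\Z$. A Claim-3/Claim-4 style count, as in the proof of Theorem \ref{roughthm}, then gives $\dim E^s(k)=r$ and $\dim E^u(k)=n-r$. Any $\xi\in E^s(k)\cap E^u(k)$ carries a solution bounded on all of $\Z$, which the forward and backward estimates force to be identically zero, so $E^s(k)\cap E^u(k)=\{0\}$ and hence $E^s(k)\oplus E^u(k)=\R^n$. The same estimates furnish a uniform lower bound on the angle between the two subspaces, giving $|\tilde P(k)|,\,|I-\tilde P(k)|\le M$ uniformly; and $A(k)$ maps $E^u(k)={\cal N}\tilde P(k)$ bijectively onto $E^u(k+1)$ because it does so on each local nullspace bundle and these match across overlaps.

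\medskip

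Finally, the loss of exponent and the growth of the constant are pure bookkeeping. Each hop within a block contributes the clean factor $Ke^{-\alpha(j-k)}$, while stitching two consecutive admissible intervals across their overlap costs a bounded number of factors of $K$ from changing local frames; tracking the worst such transition accounts for the threshold $\bar K>4K^{8}$, and the summable trapping errors force the rate to relax from $\alpha$ to any $\beta<\alpha$ and fix the required size $N_0(M,K,\bar K,L,\alpha,\beta)$. I expect the sole genuine obstacle, compared with the invertible treatment in \cite{PK1}, to be the backward direction: every backward estimate, the dimension count for $E^u$, and the bijectivity of $A(k)$ on the global unstable bundle must be routed through the restricted inverses on the nullspaces and guarded by \eqref{invPk4}. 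This routing is what inflates the constant to a fixed power of $K$, but it introduces no new idea beyond those already used in Sections 2 and 5.
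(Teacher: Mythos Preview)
The paper does not actually prove this theorem: after stating it, the authors say only that ``it can be proved by a fairly straightforward modification of the proof in \cite{PK1}'' and refer to \cite{DMS} for a treatment in a more general setting. So there is no proof in the paper to compare against; your sketch is precisely an attempt to supply what the paper deliberately omits, and the route you describe---follow Palmer's finite-time argument from \cite{PK1}, replacing global inverses by the restricted inverses on nullspaces guaranteed by Definition \ref{EDdef}---is exactly the modification the authors have in mind.

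As a sketch your outline is plausible and hits the right beats (local trapping, chaining over relatively dense admissible intervals, dimension count, uniform angle bound). A few places would need tightening in a full write-up: the backward construction of $E^u(k)$ and the bijectivity of $A(k)$ on it cannot simply ``run through the restricted inverses on the nullspaces'' of the \emph{local} projections $P_a$, since those nullspaces are only defined on finite windows and need not be globally coherent---you have to build $E^u(k)$ directly from bounded backward solutions and then check it matches the local nullspaces up to an exponentially small error; and the claim that overlapping local projections agree to $O(e^{-\alpha N/2})$ requires an argument (in \cite{PK1} this comes from comparing the two dichotomies on the overlap, and the noninvertible version needs the identity \eqref{invPk2} to control ranges). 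But these are exactly the ``straightforward modifications'' the paper alludes to, not gaps in your strategy.
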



\begin{thebibliography}{1}
	
	\bibitem{AK} B. Aulbach and J. Kalkbrenner, \emph{Exponential forward splitting for
	noninvertible difference equations}, Comput. Math. Appl. 42 (2001), pp. 743--754.
	
	\bibitem{BA} L. Barreira and C. Valls, C., \emph{General exponential dichotomies:
	from finite to infinite time}, Adv. Operator Theory 4 (2019), pp. 215--225.
	
	\bibitem{BDV} L. Barreira, D. Dragi\v cevi\' c and C. Valls, \emph{From one-sided dichotomies to two-sided dichotomies},
	Discrete Cont. Dyn. Sys. 35(7) (2015), pp. 2817--2844.
	
	\bibitem{BSV}   L. Barreira, C. Silva, and C. Valls, \emph{Nonuniform behavior
		and robustness}, J. Differential Eqns. 246 (2009), pp. 3579--3608.
	
	\bibitem{BPD} F. Battelli and K.J. Palmer, \emph{Strongly exponentially separated linear
	difference equations}, in \emph{Difference Equations and Discrete Dynamical Systems
	with Applications}, Springer Proceedings in Mathematics and Statistics, Vol. 312, 2020,
    pp. 149--186.
	
	\bibitem{C}   W.A. Coppel, \emph{Dichotomies in Stability Theory}, Springer Lecture
	Notes 629, Springer-Verlag, Berlin, 1978.
	
	\bibitem{DMS} A. Ducrot, P. Magal, and O. Seydi, \emph{A finite-time condition for
	exponential trichotomy in infinite dynamical systems}, Canadian J. Math 67 (2015), pp. 1065--1090.
	
	
	\bibitem{DMS1} A. Ducrot, P. Magal, and O. Seydi, \emph{Persistence of exponential trichotomy
	for linear operators: a Lyapunov-Perron approach}, J. Dyn. Diff. Eqns. 28 (2016), pp. 93--126.
	
	\bibitem{H} D. Henry, \emph{Geometric Theory of Semilinear Parabolic Equations},
	Springer Lecture Notes, Vol. 840, Springer, 1981.
	
	\bibitem{K} J. Kalkbrenner,  \emph{Exponentielle Dichotomie und chaotische Dynamik nichtinvertierbarer Differenzengleichungen}, Ph.D. thesis, University of Augsburg, 1994.
	
	\bibitem{L} X.B. Lin,  \emph{Exponential dichotomies and homoclinic orbits in functional differential equations}, J. Differential Eqns. 63(2) (1986), pp. 227--254.

    \bibitem{PK1} K. Palmer, \emph{A finite-time condition for exponential dichotomy},
	J. Difference Eqn. Appl. 17 (2011), pp. 221--234.
	
	\bibitem{PC} C. P\" otzsche, \emph{Geometric Theory of Discrete Nonautonomous
	Dynamical Systems}, Springer Lecture Notes 2002, Springer-Verlag, Berlin, 2010.

    \bibitem{PC1} C. P\" otzsche, \emph{Fine structure of the dichotomy spectrum}, Integral Eqns. and Operator Theory 73(1) (2012), pp. 107--151. 
	
	\bibitem{S} A.L. Sasu, \emph{Exponential dichotomy and dichotomy radius
	for difference equations}, J. Math. Anal. Appl. 344 (2008), pp. 906--920.
	
	\bibitem{ZLZ}   L. Zhou, K. Lu and W. Zhang, \emph{Roughness of tempered exponential
	dichotomies for infinite-dimensional random difference equations},
	J. Differential Eqns. 254 (2013), pp. 4024--4046.
	
	\bibitem{ZZ} L. Zhou and W. Zhang, \emph{Admissibility and roughness of nonuniform exponential dichotomies for difference equations}, J. Func. Anal. 271 (2016), pp. 1087--1129.
	
	
	
		
\end{thebibliography}
\end{document}